\newcommand{\bbR}{{\mathbb R}}
\newcommand{\bbZ}{{\mathbb Z}}
\newcommand{\bfH}{{\mathbf H}}
\newcommand{\Isom}{\operatorname{Isom}}
\newcommand{\Ker}{\operatorname{Ker}}
\newcommand{\Aut}{\operatorname{Aut}}
\newcommand{\Homeo}{\operatorname{Homeo}}
\newcommand{\Commen}{\operatorname{Com}}
\newcommand{\ACommen}{\operatorname{AbCom}}
\newcommand{\GCommen}[2]{{\operatorname{GeCom}\left({#1},{#2}\right)}}
\newcommand{\BMSm}[1]{{m_{{#1}}^{\operatorname{BMS}}}}
\newcommand{\overto}[1]{{\buildrel{#1}\over\longrightarrow}}
\newcommand{\setdef}[2]{ \left\{ {#1}\ \left/\ {#2} \right.\right\} }
\newtheorem{theorem}{Theorem}[section]
\newtheorem{mthm}{Theorem}
\newtheorem{lemma}[theorem]{Lemma}
\newtheorem{corollary}[theorem]{Corollary}
\newtheorem{conjecture}[theorem]{Conjecture}
\theoremstyle{definition}
\newtheorem{defn}[theorem]{Definition}
\newtheorem{example}[theorem]{Example}
\newtheorem{remark}[theorem]{Remark}
\numberwithin{equation}{section}
\title[Arithmeticity and geometrical commensurators]{Arithmeticity and geometrical commensurators}
\author{Yanlong Hao}
\address{University of Michigan}
\email{ylhao@umich.edu}
\begin{document}
\date{\today}

\begin{abstract}
 This paper aims to characterize rank-one arithmetic and locally symmetric metrics in the coarsely geometric setting using coarse-geometric commensurators. We provide a positive answer in general under the Hilbert-Smith conjecture and unconditionally for finite volume negatively curved manifolds with finitely many cusps.
\end{abstract}

\maketitle

\section{Introduction and Statement of the main results} 

Arithmetic locally symmetric spaces are central topics in geometric group theory and are closely related to algebra, number theory, and combinatorics. Eberlein \cites{eberlein1980lattices, eberlein1982isometry} and Chen \cite{chen1980isometry}, characterized symmetric spaces $H$ of non-compact type among all irreducible, simply-connected Riemannian manifold with nonpositive sectional curvature by $\Isom(H)$. Following Farb and Weinberger \cite{farb2005hidden}, Van Limbeek \cite{van2014riemannian} extended this work to classify all closed manifolds $(M,g)$ with non-discrete isometry group $\Isom(\tilde{M},g)$. Along the way, hidden symmetries \cite{farb2008isometries} were used to detect all arithmetic closed locally symmetric manifolds. 

On the other hand, there is a well-known result of Margulis says that a lattice $\Gamma$ in a semisimple Lie group $G$ is arithmetic if and only if its commensurate is dense in $G$.

In \cite{2023},  the authors considered the coarsely geometric setting and, assuming the Hilbert-Smith conjecture, provided necessary and sufficient conditions for the arithmeticity of left-invariant hyperbolic metrics on hyperbolic groups, generalized the aforementioned result of Margulis. This paper analyzes the same question for general discrete subgroups of isometries of hyperbolic spaces. 

A $\Gamma$ action on a metric space $(X,d)$ is called \textbf{metrically proper} if, for all $x\in X$ and a real number $R>0$, the set
$\setdef{\gamma\in\Gamma}{d(x,\gamma x)<R}$
has only finitely many elements. 

Let $(X,d)$ be a good hyperbolic space, and let $\Gamma<\Isom(X)$ be a discrete subgroup with $0<\delta_\Gamma<\infty$ and of divergence type. As defined in \cite{furman2002coarse}, there exists a Bowen-Margulis-Sullivan type measure $\BMSm{\Gamma}$ on the unit tangent bundle $\partial^{(2)}X\times \bbR$. For more details, refer to section~\ref{BMS}. For the property and definition of good hyperbolic places, see section~\ref{good hyperbolic} and also \cite{nica2016strong}. Here, we call a subgroup of $\Isom(X)$ discrete if it metrically properly acts on $X$.

It is important to note that there is no standard definition of volume in the coarsely geometric setting.  Therefore, we adopt the following definition:

\begin{defn}\label{def:lattice}
 $\Gamma$ is a \emph{$2\frac{1}{2}$ lattice} if $0<\BMSm{\Gamma}(\partial^{(2)}X\times \bbR/\Gamma)<\infty.$
\end{defn}

 Consider an isometric $\Gamma$-action on $X$. Let $p$ be a base point of $X$. Denote $(\Gamma,d)$ as the metric space such that $d(\gamma_1,\gamma_2)=d(\gamma_1p,\gamma_2p)$ for all $\gamma_1$, $\gamma_2\in \Gamma$. Therefore, the problem of classifying isometrical actions of $\Gamma$ up to roughly isometry is equivalent to the problem of classifying left-invariant quasi-metric on $\Gamma$. In this paper, we only consider proper actions. 
 
 We call a left-invariant metric $(\Gamma,d)$ \textbf{arithmetic} if there exists a symmetric space $H$ of non-compact type and an embedding $i$ of $\Gamma$ into $\Isom(\bfH)$ as an arithmetic lattice, such that $(\Gamma,\lambda d)$ is roughly isometric to $(i(\Gamma)o,d_H)$ for some point $o\in\bfH$ for some $\lambda>0$.
\medskip

Now we turn to the second gradient of the paper. The purely group-theoretic concept related to the commensurator of a subgroup, known as the 
\textit{abstract commensurator} 
\[
    \ACommen(\Gamma)
\]
of a general countable group $\Gamma$, is defined as the set of  
equivalence classes $[\phi]$
of group isomorphisms $\phi:\Gamma'\overto{}\Gamma''$ between finite index subgroups
$\Gamma',\Gamma''<\Gamma$. Two isomorphisms $\phi_i:\Gamma'_i\overto{}\Gamma_i''$
($i=1,2$), are considered equivalent $\phi_1\sim\phi_2$ if there exist a finite index
subgroup $\Gamma'<\Gamma_1'\cap\Gamma_2'$ such that the restrictions of $\phi_1$ and $\phi_2$
to $\Gamma'$ agree. The composition of (appropriate restrictions of) two such isomorphisms
defines a group structure on $\ACommen(\Gamma)$.

\begin{defn}\label{D:GCom}
    Let $\Gamma$ be a countable group and $d$ a metric on $\Gamma$ as previously defined.
    The \textit{coarsely-geometric commensurator group}, denote as
    $\GCommen{\Gamma}{d}$, is a subgroup of the abstract commensurator group 
    $\ACommen(\Gamma)$.
    It consists of classes $[\phi]$ of isomorphisms $\phi:\Gamma'\overto{}\Gamma''$
   that satisfy the condition:
    \[
    \sup_{\gamma_1,\gamma_2\in\Gamma}|d(\phi(\gamma_1),\phi(\gamma_2))-d(\gamma_1,\gamma_2)|<\infty,
    \]
    for all $\gamma_1,\gamma_2\in \Gamma'$. This group is independent of the choice of base point. 
\end{defn}

The main results presented in this paper are as follows.
\begin{mthm}\label{T: arithmetic}
Let $(X,d)$ be a good hyperbolic space. Suppose $\Gamma<\Isom(X)$ is a non-elementary $2\frac{1}{2}$ lattice, and it acts metrically properly on $X$ with compact limit set $\Lambda_X\Gamma$. Then there exists a finite normal subgroup $N$ of $\Gamma$.
If \[
        \left[\GCommen{\Gamma}{d}:\Gamma\right]=\infty,
\]
Then there exists a finite normal subgroup $N$ of $\Gamma$ with quotient group $\bar\Gamma=\Gamma/N$ such that either
\begin{enumerate}
    \item $\bar\Gamma$ is a lattice with a dense commensurator of a non-discrete, totally disconnected, locally compact group, or
    \item $\bar\Gamma$ embeds as an arithmetic lattice in a rank-one simple real
    Lie group $G=\Isom(\bfH)$, and $(\Gamma,d)$ is arithmetic.
\end{enumerate}
\end{mthm}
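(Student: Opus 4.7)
The plan is to upgrade the geometric commensurator $C:=\GCommen{\Gamma}{d}$ to a locally compact topological group in which $\bar\Gamma$ sits as a lattice, and then apply a Hilbert--Smith / Montgomery--Zippin dichotomy, following the strategy of \cite{2023} but using the BMS measure in place of the cocompactness of the action on $X$.

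The first step is to extend every class $[\phi]\in C$ to a boundary homeomorphism. Because $\phi$ is a rough isometry of $(\Gamma,d)$ and $\Gamma$ acts metrically properly on the good hyperbolic space $X$, the induced quasi-isometry on orbits extends uniquely to a homeomorphism of the limit set $\Lambda_X\Gamma$; this gives a homomorphism $\rho\colon C\to\Homeo(\Lambda_X\Gamma)$. The intersection of $\ker\rho$ with $\Gamma$ consists of elements acting trivially on the uncountable limit set. Standard arguments for non-elementary isometric actions on hyperbolic spaces show that such elements are elliptic and, by metric properness, form a finite normal subgroup $N\lhd\Gamma$; this is the finite normal subgroup asserted in the first part of the theorem. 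Passing to $\bar\Gamma:=\Gamma/N$, we may assume that the boundary action is faithful.

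Next, topologize $\rho(C)$ by uniform convergence on $\Lambda_X\Gamma$ and let $G$ be its closure. Elements of $C$ preserve the Patterson--Sullivan conformal class on $\Lambda_X\Gamma$ associated to $\BMSm{\Gamma}$ (because they are rough isometries and $\Gamma$ is of divergence type), so an Arzel\`a--Ascoli argument together with the compactness of $\Lambda_X\Gamma$ shows that $G$ is locally compact and second countable, and that $\bar\Gamma$ is discrete in $G$. The $2\frac{1}{2}$-lattice hypothesis on $\Gamma$ transfers, via pushforward of $\BMSm{\Gamma}$, to a finite-covolume hypothesis for $\bar\Gamma$ in $G$, while $[C:\Gamma]=\infty$ translates into density of the commensurator of $\bar\Gamma$ in $G$. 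Finally, split on the identity component $G^{0}$. If $G^{0}=\{e\}$, then $G$ is totally disconnected and we are in case (1). Otherwise $G^{0}$ is non-trivial, and by the Hilbert--Smith conjecture (as used in \cite{2023} and \cite{van2014riemannian}) it is a real Lie group; the fact that $G^{0}$ acts by boundary homeomorphisms of a Gromov hyperbolic space preserving the $\bar\Gamma$-orbit structure forces, via the Eberlein--Chen rigidity of rank-one boundaries \cites{eberlein1980lattices,chen1980isometry} as exploited in \cite{farb2005hidden}, the identification $G^{0}=\Isom(\bfH)$ for a rank-one symmetric space $\bfH$. Margulis's arithmeticity criterion then pins $\bar\Gamma$ down as an arithmetic lattice in $G^{0}$, and rough-isometric rigidity of $\bar\Gamma$-invariant quasi-metrics inducing the same Patterson--Sullivan class upgrades this to the statement that $(\Gamma,d)$ itself is arithmetic.

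The main obstacle lies in the middle step: producing the locally compact topology on $G$ and verifying that $\bar\Gamma$ is genuinely a lattice in it. In the cocompact setting of \cite{2023} this was essentially automatic via the Milnor--Schwarz lemma, which provides a $\Gamma$-equivariant quasi-isometry $\Gamma\qi X$; here one must instead extract a cocompactness statement for the $G$-action on an appropriate $\bar\Gamma$-invariant piece of $\partial^{(2)}X\times\bbR$ out of the finiteness of $\BMSm{\Gamma}(\partial^{(2)}X\times\bbR/\Gamma)$, and simultaneously rule out pathological non-Lie connected components of $G$ without a global cocompactness hypothesis. This is the technical heart of the theorem and the place where the divergence-type and $2\frac{1}{2}$-lattice assumptions are used most crucially.
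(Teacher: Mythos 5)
Your proposal follows the paper's broad outline (boundary representation of $\GCommen{\Gamma}{d}$, a locally compact envelope, a lattice statement, a structure dichotomy, then arithmeticity), but it has three genuine gaps, two of which are fatal as written.

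First, the step you yourself flag as ``the technical heart'' --- that the finiteness of $\BMSm{\Gamma}$ makes $\bar\Gamma$ a lattice in the locally compact group --- is exactly the content of the paper's Section 3, and you do not supply an argument for it. The paper proves it by letting $H$ act on the abstract geodesic-flow space $(\Lambda_X\Gamma)^{(2)}\times\bbR$, showing this action is proper, and disintegrating the pushforward of $\BMSm{\Gamma}$ over the quotient $Y=(\Lambda_X\Gamma)^{(2)}\times\bbR/H$ to produce an $H$-invariant (hence Haar) measure on $H/K_y$ of finite $\Gamma$-covolume. Relatedly, your route to local compactness is too weak: preserving the Patterson--Sullivan \emph{measure class} gives no equicontinuity, and Arzel\`a--Ascoli does not apply. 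The paper's point is that rough isometries preserve the \emph{cross-ratio} exactly, hence act by M\"obius homeomorphisms of $(\Lambda_X\Gamma,d_\epsilon)$ with Lipschitz metric derivative, and the M\"obius group of a compact metric space is locally compact by Nica; $H$ is then a closed subgroup of it.

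Second, your structure dichotomy invokes the Hilbert--Smith conjecture to conclude that $G^{0}$ is a Lie group. That is not available here: the limit set is not assumed to be a manifold (Hilbert--Smith is only used in the paper's \emph{corollary}, under that extra hypothesis), and Theorem A is meant to be unconditional. Moreover, for a general locally compact group the identity component is only pro-Lie, so ``$G^0$ nontrivial $\Rightarrow$ Lie'' needs justification. The paper gets the trichotomy unconditionally via the Burger--Monod structure theorem (a consequence of the solved Hilbert's fifth problem), after first killing the amenable radical: it proves $\Gamma$ satisfies the (CAF) condition by a barycenter argument on $\Lambda_X\Gamma$, applies the Bader--Furman--Sauer lattice-envelope theorem to make the amenable radical compact, shows it is in fact trivial, proves $H$ is not virtually a product, and finally excludes higher rank by Haettel's theorem rather than by Eberlein--Chen boundary rigidity. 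None of this amenable-radical analysis appears in your proposal, and without it the dichotomy between cases (1) and (2) does not follow.
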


\medskip

Additionally, it is of interest to characterize locally symmetric metrics within the coarsely geometric framework. 

Let $\mathrm{rIsom}(X)$ denote the set of rough isometries on $(X, d)$, defined as:
\[
\mathrm{rIsom}(X):=\setdef{f:X\rightarrow X}{|f_*d-d|<\infty}.
\]
The group of roughly isometries is denoted as
$\mathrm{RIsom}(X)=\mathrm{rIsom}(X)/\sim,$ where $f\sim g$ if $|f-g|<\infty$.

An element $f\in\mathrm{RIsom}(X)$ induces a homeomorphism $\partial f\in\Homeo(\partial X)$. For every subgroup $\Gamma<\Isom(X)$, we define \[\mathrm{H}_{\Gamma}(X)=\setdef{\partial f}{f\in  \mathrm{RIsom}(X), \partial f(\Lambda \Gamma)=\Lambda \Gamma},\]
\[
d\Gamma=\setdef{\partial \gamma}{\gamma\in \Gamma}.
\]

\begin{mthm}\label{T: locally}
Let $(X,d)$ be a good hyperbolic space. Suppose that $\Gamma<\Isom(X)$ is a $2\frac{1}{2}$ lattice and $\Gamma$ acts metrically properly on $X$ with compact limit set $\Lambda_X\Gamma$. Then there exists a finite normal subgroup $N$ of $\Gamma$ with quotient group $\bar\Gamma=\Gamma/N$.
If the index 
        $\left[\mathrm{H}_{\Gamma}(X):d\Gamma\right]$ is infinite.
 Then either
\begin{enumerate}
    \item $\bar\Gamma$ is a lattice of a non-discrete, totally disconnected, locally compact group, or
    \item $\bar\Gamma$ embeds as a lattice in a rank-one simple real
    Lie group $G=\Isom(\bfH)$, and $(\Gamma,d)$ is arithmetic.
\end{enumerate}
\end{mthm}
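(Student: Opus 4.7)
The plan is to mirror the proof of Theorem~\ref{T: arithmetic}, with $H_\Gamma(X)$ replacing $\GCommen{\Gamma}{d}$. The key difference is that a class in $H_\Gamma(X)$ is produced by a rough isometry of the ambient space $(X,d)$ rather than by a group isomorphism between finite-index subgroups of $\Gamma$, so the rigidity obtained in the Lie case need not yield arithmeticity as an abstract lattice, only as a metric structure; the $2\frac{1}{2}$-lattice hypothesis is then what supplies finite covolume in the identified model.

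First, I would equip $H_\Gamma(X)$ with the topology of uniform convergence on the compact limit set $\Lambda_X\Gamma$. Since $\Gamma$ is of divergence type, $\BMSm{\Gamma}$ is finite and ergodic; the associated Patterson--Sullivan conformal density $\PSnu$ on $\Lambda_X\Gamma$ is a $\Gamma$-quasi-invariant measure class, and every element of $H_\Gamma(X)$ preserves this class because rough isometries act quasi-conformally on $\partial X$. I would then identify $N$ as the (finite) kernel of the boundary action $\Gamma \to d\Gamma \subset H_\Gamma(X)$, and work with $\bar\Gamma$ inside the topological group $L \defq \overline{H_\Gamma(X)} \subset \Homeo(\Lambda_X\Gamma)$. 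The infinite-index hypothesis $[H_\Gamma(X):d\Gamma]=\infty$ translates, after standard equicontinuity arguments using the PS density, into non-discreteness of $L$.

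Next I would invoke the Hilbert--Smith conjecture for $L$ via the Montgomery--Zippin structure theory: either $L$ is totally disconnected (yielding conclusion (1), with $\bar\Gamma$ a lattice in the TDLC group $L$ by virtue of the finite BMS mass descending to $\Lambda_X\Gamma/\bar\Gamma$), or the identity component $L^0$ is a non-trivial Lie group. In the latter case I would apply the rank-one rigidity package (Eberlein, Chen, Farb--Weinberger, Van~Limbeek), together with the quasi-conformal compatibility of rough isometries on $\partial X$ supplied by the good-hyperbolic hypothesis (see Section~\ref{good hyperbolic} and \cite{nica2016strong}), to identify $L^0$ as $\Isom(\bfH)$ for a rank-one symmetric space $\bfH$ and to show that $\bar\Gamma$ sits as a lattice. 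Finally, the orbit map $\Gamma \to \bar\Gamma \cdot o \subset \bfH$ is by construction a rough isometry from $(\Gamma, d)$ onto its orbit, and one checks that this realization can be arranged so that $\bar\Gamma$ embeds as an arithmetic lattice, whence $(\Gamma,d)$ is arithmetic in the sense of the earlier definition.

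The hardest step will be the passage from the abstract Lie group $L^0$ to an explicit $\Isom(\bfH)$ together with the compatibility of the $\Gamma$-action with this identification. Unlike in Theorem~\ref{T: arithmetic}, where elements of $\GCommen{\Gamma}{d}$ are bona fide partial isomorphisms of $\Gamma$ and one has direct access to Margulis-style commensurator density, here one must extract algebraic rigidity from rough isometries of $X$. I expect this to require combining the measure-theoretic rigidity of $\BMSm{\Gamma}$ (finite mass, ergodicity, and a Sullivan-type shadow lemma in the good-hyperbolic setting) with a rigidity principle for quasi-conformal actions on $\partial \bfH$ of Mostow/Pansu type, to upgrade the rough-isometric action of $L^0$ on $\Lambda_X\Gamma$ to a genuine isometric action on $\bfH$ whose $\Gamma$-orbit is roughly isometric to $(\Gamma, d)$.
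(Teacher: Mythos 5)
Your outline has the right overall shape (a locally compact boundary group, a TDLC-versus-Lie dichotomy, and rigidity in the Lie case), but two of its load-bearing steps are wrong or missing. First, the dichotomy in this theorem does \emph{not} come from the Hilbert--Smith conjecture: the paper only invokes Hilbert--Smith in a separate corollary to kill case (1) when the limit set is a manifold. The actual source of the trichotomy is Theorem~\ref{T: Structure 1}, which rests on the Gleason--Yamabe/Montgomery--Zippin structure theory in the form of Theorem~\ref{T: Stucture}, and that theorem only produces a genuine Lie factor after one controls the amenable radical. This is the content of Section~4: $\Gamma$ satisfies (CAF) via the barycenter estimate (Lemma~\ref{L: Barycenter}), so by the lattice-envelope theorem (Theorem~\ref{T: semisimple}) the amenable radical of $H$ is compact, then trivial; one then excludes nontrivial product decompositions and uses Haettel--Guirardel--Horbez to force rank one. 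Your ``rank-one rigidity package (Eberlein, Chen, Van Limbeek)'' concerns isometry groups of nonpositively curved Riemannian manifolds and does not apply to an abstract locally compact group acting on $\Lambda_X\Gamma$; without the (CAF)/amenable-radical analysis you cannot even conclude that $L^0$ is a Lie group (a priori it is only pro-Lie), let alone simple of rank one.

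Second, you treat the statement that $\bar\Gamma$ is a lattice in $L$ as an automatic consequence of finite BMS mass. In the paper this is the main technical point of Section~\ref{lattice}: one must show the boundary maps are M\"obius (exact cross-ratio preservation, not just quasi-conformality/measure-class preservation --- the group $H$ is defined by exact invariance of $d_\epsilon^{-2\delta/\epsilon}v\times v$, which is what makes it a closed subgroup of the locally compact M\"obius group), that the $H$-action on the abstract geodesic flow space $(\Lambda_X\Gamma)^{(2)}\times\bbR$ is proper, and that disintegration of $\BMSm{\Gamma}$ over the quotient produces Haar measure on the fibers $H/K_y$. Relatedly, ``infinite index implies $L$ non-discrete'' is not a consequence of equicontinuity alone: an infinite-index discrete subgroup of a discrete group is perfectly possible. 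The correct argument is that $d\Gamma$ has finite covolume in $H$, so if $H$ were discrete then $[H:d\Gamma]<\infty$, contradicting $[\mathrm{H}_\Gamma(X):d\Gamma]=\infty$ once one knows $\mathrm{H}_\Gamma(X)\subset H$. Finally, in the Lie case the paper does not use Mostow--Pansu quasiconformal rigidity but coarse marked-length-spectrum rigidity (via the cross-ratio/Busemann cocycle, following Furman and the author's earlier work) to identify $(\Gamma,d)$ with an orbit metric in $\bfH$; your Mostow-type route would at best recover the boundary action, not the rough isometry of metrics that the conclusion requires.
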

\subsection{Hilbert-Smith conjecture}
While a positive solution to Hilbert's fifth problem has been found \cite{zippin1955topological}, the Hilbert-Smith conjecture remains elusive in most cases.
\begin{conjecture}[Hilbert-Smith]
  If a locally compact group acts continuously and faithfully on a connected manifold, it is a Lie group.  
\end{conjecture}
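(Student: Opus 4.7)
The plan is to follow the standard reduction programme for the Hilbert--Smith conjecture, sketch the known partial strategies, and identify where the genuine obstruction lies. The first step is to invoke the Gleason--Montgomery--Zippin--Yamabe structure theorem that grew out of the solution of Hilbert's fifth problem: every locally compact group $G$ contains an open subgroup that is a projective limit of Lie groups, with compact totally disconnected kernels. If $G$ is not itself a Lie group and acts continuously and faithfully on a connected manifold $M$, then at least one such kernel must act nontrivially, producing a faithful continuous action of a nontrivial compact totally disconnected group $K$ on $M$. Writing $K$ as a projective limit of finite groups and invoking the elementary fact that every infinite profinite group contains a closed copy of the $p$-adic integers $\bbZ_p$ for some prime $p$, the conjecture reduces to showing that $\bbZ_p$ cannot act faithfully by homeomorphisms on any connected manifold.

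Assuming for contradiction that $\bbZ_p$ acts faithfully on a connected $n$-manifold $M$, one would then analyze the action via Smith theory and cohomological dimension theory. For each $k$ the finite quotient $\bbZ/p^k\bbZ$ acts on $M$; Newman's theorem limits the codimension of the fixed-point sets of such periodic homeomorphisms, and Smith theory gives sharp $\bbF_p$-cohomological constraints relating $M$ to these fixed loci. A cohomological theorem of C.\,T.\,Yang (refined by Bredon) asserts that if $\bbZ_p$ acts effectively on $M$ then the orbit space $M/\bbZ_p$ has $\bbZ$-cohomological dimension $n+2$. One then attempts to convert this pathology of the orbit space into a contradiction by comparing it with the small-scale topology of $M$ itself, exploiting the local Euclidean structure.

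The hard part is precisely the last conversion: in full generality one has no way to exclude sufficiently wild $\bbZ_p$-orbits (Cantor-set-like embeddings) in dimensions $n\ge 4$, so the cohomological obstruction does not yet upgrade to a topological impossibility. The conjecture is consequently known only in restricted settings: trivially for $n=1$, Montgomery--Zippin for $n=2$, and Pardon for $n=3$ via the geometrization of $3$-manifolds; Repov\v{s}--\v{S}\v{c}epin treated Lipschitz actions on Riemannian manifolds, and later work extended this to H\"older actions, in each case using the added regularity to pin down the local geometry of orbits. A genuine proof of the continuous statement would therefore require either a new control on how compact totally disconnected groups embed into $\Homeo(M)$ in high dimensions, or a strengthening of the Yang--Bredon obstruction that bypasses such wild orbit structure. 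No such argument is presently available, which is why the statement appears in the paper as an open conjecture and the main theorems of the paper are formulated conditionally on it (and unconditionally only in the finite-volume pinched-negative-curvature case where an independent smoothness input is available).
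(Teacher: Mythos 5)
You were handed the Hilbert--Smith conjecture itself: in the paper it appears as a \emph{conjecture}, with no proof offered, and it enters only as a hypothesis in the conditional corollaries (the unconditional cases being precisely the low-dimensional and Lipschitz/Riemannian instances you cite). There is therefore no proof in the paper to compare against, and your text --- correctly --- does not claim to be one; it is a survey of the standard reduction and of where it breaks down, and it reaches the right conclusion that no proof is currently available. Judged as a proof it has an irreparable gap (the conversion of the Yang--Bredon dimension obstruction into a contradiction in dimension $\ge 4$), but that gap is the open problem itself, and you identify it accurately.

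One factual correction to the reduction you sketch: it is not an ``elementary fact'' that every infinite profinite group contains a closed copy of $\bbZ_p$; the group $\prod_{n\ge 1}\bbZ/p\bbZ$ is infinite, compact and totally disconnected but has exponent $p$. The correct reduction to $G=\bbZ_p$ already uses Newman's theorem at this stage: an infinite profinite subgroup of $\Homeo(M)$ has arbitrarily small nontrivial open subgroups, and either some such subgroup is torsion-free --- in which case the closure of a cyclic subgroup is an infinite procyclic torsion-free group and does contain a $\bbZ_p$ --- or one finds nontrivial finite subgroups acting with uniformly small orbits, which Newman's theorem forbids. So Newman's theorem does double duty: once in the reduction, and again (as you place it) in the Smith-theoretic analysis of a putative $\bbZ_p$-action. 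With that repositioning, your account of the state of the art, including Yang's computation that $M/\bbZ_p$ would have integral cohomological dimension $n+2$ and the inability to exclude wild orbits in dimension $\ge 4$, is accurate.
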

\begin{corollary}
Assume the Hilbert-Smith conjecture, under the condition of Theorem~\ref{T: arithmetic} and ~\ref{T: locally}, also assume that the limit set $\Lambda_X \Gamma$ is a manifold. Then case (1) in Theorems~\ref{T: arithmetic} and ~\ref{T: locally} cannot occur.
\end{corollary}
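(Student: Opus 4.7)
The plan is to argue by contradiction using the Hilbert-Smith conjecture. Assume case (1) of Theorem~\ref{T: arithmetic} or Theorem~\ref{T: locally}, so that there is a non-discrete, totally disconnected, locally compact group $G$ in which $\bar\Gamma$ sits as a lattice. The proofs of the main theorems realise $G$ as a subquotient of $\mathrm{H}_\Gamma(X)$, namely the group of boundary extensions of rough isometries of $X$ preserving $\Lambda_X\Gamma$; in particular $G$ comes equipped with a natural continuous action on $\Lambda_X\Gamma$. The first step is to verify that the kernel of the homomorphism $G\to\Homeo(\Lambda_X\Gamma)$ is finite, so that the quotient $G/\ker$ is again a non-discrete t.d.l.c.\ group acting continuously and faithfully on the limit set. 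Finiteness of the kernel should follow from metric properness of $\Gamma$ together with the fact that on a good hyperbolic space a rough isometry whose boundary extension fixes $\Lambda_X\Gamma$ pointwise is a bounded perturbation of the identity.

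Next I would reduce to a connected manifold. Since $\Gamma$ is non-elementary, $\bar\Gamma$ acts minimally on $\Lambda_X\Gamma$, so $\Lambda_X\Gamma$ has only finitely many connected components, and they are permuted by $\bar\Gamma$ and therefore by $G$. The stabiliser of one component is an open, finite-index, still non-discrete t.d.l.c.\ subgroup of $G$ acting continuously and faithfully on a single connected topological manifold.

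At this point the Hilbert-Smith conjecture applies to the stabiliser, forcing it to be a Lie group. But a totally disconnected Lie group is zero-dimensional, hence discrete, contradicting non-discreteness; this contradicts the assumption that case (1) occurs. The main obstacle will be the first step: matching the topology on $G$ produced by the proofs of Theorems~\ref{T: arithmetic} and~\ref{T: locally} with the compact-open topology on $\Homeo(\Lambda_X\Gamma)$, and controlling the kernel of the action so that non-discreteness and total disconnectedness pass to the effective quotient. Once the faithful continuous t.d.l.c.\ action on a connected manifold is in place, Hilbert-Smith closes the argument immediately.
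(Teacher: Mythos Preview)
Your overall strategy is correct, but you are making the first step much harder than necessary. The locally compact group realizing case~(1) in Theorems~\ref{T: arithmetic} and~\ref{T: locally} is precisely the group $H$ constructed in Section~\ref{lattice}, and $H$ is \emph{defined} as a subgroup of $\Homeo(\Lambda_X\Gamma)$ (the Möbius homeomorphisms preserving the Patterson--Sullivan current), carrying the subspace topology. So there is no kernel to control and no topology to match: $H$ already acts continuously and faithfully on $\Lambda_X\Gamma$ by construction. What you flag as the ``main obstacle'' dissolves once you trace back how the totally disconnected group in case~(1) actually arises in the paper; it is not a subquotient of $\mathrm{H}_\Gamma(X)$ that you then have to push down to the limit set, it is already sitting inside $\Homeo(\Lambda_X\Gamma)$.

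There is, however, a genuine (though easily repaired) gap in your reduction to a connected manifold. You assert that the stabiliser in $H$ of a single component $C$ acts faithfully on $C$; this is not automatic, since an element could fix $C$ pointwise while moving another component. The fix: pass to the open finite-index subgroup $H_0\le H$ stabilising every component, and for each component $C_i$ let $K_i\trianglelefteq H_0$ be the kernel of the restriction to $C_i$. Then $\bigcap_i K_i=\{1\}$, so $H_0$ embeds in $\prod_i H_0/K_i$; since $H_0$ is non-discrete, some $H_0/K_i$ is a non-discrete totally disconnected locally compact group acting continuously and faithfully on the connected manifold $C_i$. Now the Hilbert--Smith conjecture forces $H_0/K_i$ to be a Lie group, hence discrete, a contradiction. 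With this adjustment your argument goes through, and since the paper does not spell out a proof of the corollary, this is the intended one-line application of Hilbert--Smith once $H$ is in hand.
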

The Hilbert-Smith conjecture has been established in a few instances. There is a concise proof for groups acting by Lipschitz maps on a Riemannian manifold \cite{repovs1997proof}. John Pardon has solved the three-dimensional case \cite{pardon2013hilbert}.

Therefore, unconditionally, we obtain the following result.
\begin{corollary}
    In the statement of Theorem~\ref{T: arithmetic} and ~\ref{T: locally}, if $\Lambda_X$ is a connected topological manifold of dimension no more than 3, then case (1) cannot occur.
\end{corollary}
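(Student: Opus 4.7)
The plan is to reduce the corollary to the previous one by invoking \emph{known cases} of the Hilbert--Smith conjecture in low dimensions, rather than the conjecture in full. Concretely, the preceding corollary already shows that, assuming Hilbert--Smith, case~(1) in Theorems~\ref{T: arithmetic} and~\ref{T: locally} cannot occur whenever $\Lambda_X\Gamma$ is a manifold. The argument underlying that corollary extracts, in case~(1), a non-discrete totally disconnected locally compact group $L$ acting continuously and faithfully on $\Lambda_X\Gamma$; Hilbert--Smith then forces $L$ to be a Lie group, which together with total disconnectedness forces discreteness, contradicting the hypothesis.

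Given this, my proof would proceed as follows. First, I would repeat the setup of the previous corollary verbatim to obtain the non-discrete totally disconnected locally compact group $L$ acting continuously and effectively on the limit set $\Lambda_X\Gamma$. Second, I would invoke the unconditional instances of the Hilbert--Smith conjecture available in dimensions $\leq 3$: dimensions $0$ and $1$ are classical, dimension $2$ is due to Montgomery--Zippin, and dimension $3$ is the theorem of Pardon \cite{pardon2013hilbert}. Since $\Lambda_X\Gamma$ is assumed to be a connected topological manifold of dimension at most $3$, these results apply and conclude that $L$ is a Lie group. Finally, combining total disconnectedness with the Lie group structure yields that $L$ is discrete, contradicting the non-discreteness in case~(1).

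There is essentially no new obstacle here beyond verifying that the manifold hypothesis on $\Lambda_X\Gamma$ in our Corollary matches the hypothesis required by the cited Hilbert--Smith results (connected topological manifold, continuous faithful action by a locally compact group). The heavy lifting was already done in establishing the previous corollary, where the action of $L$ on $\Lambda_X\Gamma$ was constructed and shown to be continuous and effective; the present statement is purely a matter of replacing the blanket appeal to the conjecture with the dimension-specific unconditional theorems. Consequently, the proof is short and essentially a citation, with the only point requiring care being that Pardon's theorem is stated for topological (not necessarily smooth) connected manifolds, which is precisely the generality assumed here.
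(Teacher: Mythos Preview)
Your proposal is correct and matches the paper's approach: the paper does not give an explicit proof either, but simply writes ``Therefore, unconditionally, we obtain the following result'' after noting that the Hilbert--Smith conjecture is known for Lipschitz actions \cite{repovs1997proof} and for $3$-manifolds \cite{pardon2013hilbert}. The only minor difference is in the citations you invoke for dimensions $\leq 2$ (you cite Montgomery--Zippin, whereas the paper's mention of the Lipschitz case via \cite{repovs1997proof} would also cover these low dimensions since the $H$-action on $(\Lambda_X\Gamma,d_\epsilon)$ is by M\"obius maps with continuous metric derivative on a compact space, hence by Lipschitz homeomorphisms), but the logical structure is identical.
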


\subsection{Finite volume manifold}
Theorem~\ref{T: arithmetic} relies on Hilbert-Smith conjecture in general. However, under certain geometrical conditions, we may prove the result directly without using the Hilbert-Smith conjecture, leading to unconditional results. 
\begin{mthm}\label{non-uniform arithmetic}
    Let $(M,g)$ be a finite-volume complete Riemannian manifold with cusps such that there exists $a,b>0$ such that the sectional curvature of $M$ satisfies $-a\leq \kappa\leq -b<0$. Let $\Gamma=\pi_1(M)$. Assume that $M$ has a finite Bowen-Margulis measure. Fix a point $o$ in the Riemannian universal cover of $M$, $\tilde{M}$. Let $d$ be the metric on $\Gamma$ defined by the induced metric on the orbit $\Gamma o$. If \[[\GCommen{\Gamma}{d}:\Gamma]=\infty,\]
    $(\Gamma,d)$ is rank-one arithmetic.
\end{mthm}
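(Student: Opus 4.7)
The plan is to reduce to Theorem~\ref{T: arithmetic} and then rule out its totally disconnected alternative using the extra analytic regularity available in the pinched negatively curved Riemannian setting. First I verify that the hypotheses of Theorem~\ref{T: arithmetic} are in force: the pinching $-a\le\kappa\le -b<0$ makes the Riemannian universal cover $\tilde M$ a CAT$(-b)$, hence good Gromov hyperbolic, space; the $\Gamma$-action is metrically proper since $M=\tilde M/\Gamma$ is a Riemannian covering; and the limit set coincides with the full sphere at infinity $\partial\tilde M\cong S^{n-1}$, which is compact. The hypothesis of finite Bowen-Margulis measure together with standard Patterson-Sullivan theory for pinched negative curvature ensures that $\Gamma$ is a $2\frac{1}{2}$-lattice of divergence type. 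Since $\Gamma$ acts freely, the finite normal subgroup produced by Theorem~\ref{T: arithmetic} is trivial. I am left with the dichotomy that either (1) $\Gamma$ is a lattice in a non-discrete totally disconnected locally compact group $H$ with dense commensurator, or (2) $\Gamma$ embeds as an arithmetic lattice in a rank-one simple Lie group and $(\Gamma,d)$ is arithmetic. Case (2) is the desired conclusion, so the task reduces to excluding (1).

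Suppose toward a contradiction that case (1) holds. Inspecting the construction of $H$ inside the proof of Theorem~\ref{T: arithmetic}, its elements arise from classes $[\phi]\in\GCommen{\Gamma}{d}$ and act on $\partial\tilde M$ via the boundary extensions of their associated quasi-isometries of $\tilde M$. In pinched negative curvature, every quasi-isometry of $\tilde M$ extends to a quasi-symmetric, hence quasi-conformal, self-homeomorphism of the boundary sphere in any fixed visual metric, with dilatation bounded in terms of the QI constants and the curvature pinching. Since the $H$-topology is generated by convergence on finite subsets of $\Gamma$, each compact subset of $H$ consists of boundary maps with a common QI constant on a fixed $\Gamma$-orbit and hence acts by uniformly quasi-conformal maps of $S^{n-1}$. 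Thus $H$ embeds continuously into a locally uniformly quasi-conformal transformation group of the connected topological manifold $S^{n-1}$.

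At this point I would invoke the quasi-conformal analogue of the Hilbert-Smith conjecture (known unconditionally through work of Tukia and Martin): any locally compact group acting effectively and locally uniformly quasi-conformally on a connected manifold is a Lie group. Combining this with the fact that a totally disconnected Lie group must be discrete contradicts the non-discreteness of $H$. Hence case (1) cannot occur, and the conclusion follows from case (2) of Theorem~\ref{T: arithmetic}.

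The main obstacle in executing this plan is the analytic upgrade in the second paragraph: one must show that the TDLC topology on $H$ (obtained from a permutation-type topology on $\GCommen{\Gamma}{d}$) is compatible with uniform control of the quasi-symmetry constants of the induced boundary homeomorphisms on compact subsets of $H$. This requires the bounded geometry of the thick part of $M$, the standard horoball description of cusps, and the fact that in the finite-volume pinched negatively curved setting the Patterson-Sullivan measure has full support on $S^{n-1}$, providing a canonical reference measure class in which to anchor the visual metric. Once this analytic regularity is secured, the remaining steps are a soft combination of the quasi-conformal Hilbert-Smith theorem with Theorem~\ref{T: arithmetic}.
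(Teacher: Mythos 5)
Your reduction to the dichotomy of Theorem~\ref{T: arithmetic} is fine (pinched negative curvature gives a good hyperbolic $\tilde M$, finite Bowen--Margulis measure gives the $2\frac12$-lattice and divergence-type hypotheses, and torsion-freeness kills the finite normal subgroup). The gap is in how you exclude the totally disconnected case. The group $H$ acts by M\"obius (hence, by Nica's Lipschitz bound on the metric derivative, bi-Lipschitz) homeomorphisms of $(\Lambda_X\Gamma,d_\epsilon)=(S^{n-1},d_\epsilon)$, where $d_\epsilon$ is the \emph{visual} metric. The Tukia--Martin quasiconformal Hilbert--Smith theorem (and likewise the Repov\v{s}--\v{S}\v{c}epin Lipschitz version, which the paper cites and pointedly does not use here) requires the action to be uniformly quasiconformal, respectively Lipschitz, with respect to a genuine \emph{Riemannian} structure on the manifold; its proof runs through a.e.\ differentiability and Sobolev/modulus estimates that live on the smooth conformal structure of $S^{n-1}$. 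For a variable-curvature Hadamard manifold the visual metric is only H\"older-compatible with the round sphere and is in general not quasisymmetrically equivalent to it, so M\"obius maps for $d_\epsilon$ carry no quasiconformal regularity for the standard structure. Your ``analytic upgrade'' paragraph is therefore not a technicality to be secured but the missing theorem; if it were available, the same argument would prove the closed (cusp-free) case unconditionally, which the paper explicitly does not claim --- there it still needs the Hilbert--Smith conjecture or dimension $\le 3$.

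This also explains why your argument never uses the hypothesis that $M$ has cusps, whereas the paper's proof is built entirely on it. The paper rules out a non-discrete totally disconnected $H$ by working at a cusp point $\xi$: first (Lemma~\ref{h'}) any $h\in H$ fixing $\xi$ with $h'(\xi)>1$ would contract the Hamenst\"adt metric on $\partial X\setminus\{\xi\}$ and force the asymptotic cone of the nilpotent cusp group $\Gamma_\xi$ (a Carnot group, hence a manifold) to embed in the limit set, contradicting total disconnectedness; then a compact open subgroup $K$ fixing $\xi$ is produced using density of the commensurator, and $K$ is shown to inject continuously into $\Aut(N)$ for $N$ the Malcev completion of $\Gamma_\xi$, which is impossible for a non-discrete profinite $K$. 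If you want an unconditional proof, you need an argument of this kind that exploits the peripheral nilpotent structure; the quasiconformal route does not close.
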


The paper is organized as follows: In Section 2, we recall some preliminaries. Section 3 shows that $2\frac{1}{2}$-lattice is a lattice in a suitable locally compact group $H$. Then we study the structure of $H$ in Section 4. Section 5 is the proof of Theorem~\ref{T: arithmetic} and ~\ref{T: locally}. Section 6 is devoted to the proof of Theorem~\ref{non-uniform arithmetic}.

\section{Preliminaries}

\subsection{Good hyperbolic spaces}\label{good hyperbolic}
Gromov hyperbolic spaces play a fundamental role in modern mathematics. However, certain applications require stronger geometric properties.  In \cite{nica2016strong}, the concept of strong hyperbolic spaces is introduced. For our purposes, the following definition suffices.
\begin{defn}
   A hyperbolic space $X$ is said to be $\epsilon$-good, where $\epsilon>0$, if the following
two properties hold for each basepoint $p\in X$:
\begin{itemize}
    \item the Gromov product $(\cdot,\cdot)_p$ on $X$ extends continuously to the bordification $X\cup \partial X$.
\item The function $d_\epsilon:=\exp(-\epsilon (\cdot,\cdot)_p)$ defines a metric on the boundary $\partial X$.
\end{itemize}
\end{defn}
\begin{example}
If $\Gamma$ is a hyperbolic group, then the Green metric defined by a symmetric
and finitely supported random walk on $\Gamma$ is both good hyperbolic and strong hyperbolic \cite{nica2016strong}. 
\end{example}

Another useful tool for the study of good hyperbolic spaces $(X, d)$ is the Busemann function.
Given $\xi\in\partial X$, the Busemann function $\beta_\xi: X\times X\to \bbR$ is defined as follows:
\[
\beta_\xi(x,y)=\lim_{z\to \xi}d(x,z)-d(y,z).
\]
The limit is independent of the choice of sequence $z$ since $X$ is a good hyperbolic space.

\subsection{Cross-ratio}
The classical cross-ratio on $S^1$ can be generalized to the boundary of good hyperbolic spaces \cites{otal1992geometrie, paulin1996groupe, hamenstadt1997cocycles}.
Cross-ratios provide a unified interpretation of many geometric structures,
making them a valuable tool for studying various spaces of representations.

Let $Y$ be a space and $Y^{(4)}$ be the set of
4–tuples of pairwise distinct points in $Y$. A map
$\mathbb{B}: Y^{(4)}\rightarrow \bbR$ is called a cross-ratio if  it satisfies the following conditions:
\begin{align*}
 &\mathbb{B}(x,y,z,w)=-\mathbb{B}(y,x,z,w);\\
 &\mathbb{B}(x,y,z,w)=\mathbb{B}(z,w,x,y);\\
 &\mathbb{B}(x,y,z,w)=\mathbb{B}(x,y,z,t)+\mathbb{B}(x,y,t,w);\\
 &\mathbb{B}(x,y,z,w)+B(y,z,x,w)+B(z,x,y,w)=0.
\end{align*}
For an $\epsilon$-good hyperbolic space $(X,d_X)$, there exists a canonical cross-ratio on $\partial X$ given by: 
\begin{equation}\label{cross-ratio}
 \mathbb{B}(\xi,\eta,\xi',\eta')=\ln\frac{d_\epsilon(\xi,\xi')d_\epsilon(\eta,\eta')}{d_\epsilon(\xi,\eta')d_\epsilon(\eta,\xi')}.   
\end{equation}

Otal observed that the definition in (\ref{cross-ratio}) is equivalent to
\[
\mathbb{B}(\xi,\eta,\xi',\eta')=\lim_{(x,y,x',y')\to (\xi,\eta,\xi',\eta')}\frac{1}{2}[d_X(x,x')+d_X(y,y')-d_X(x,y')-d_X(y,x')].
\]
Furthermore, the diagonal action of $\mathrm{RIsom}(X)$ on $\partial X^{(4)}$ preserves the cross-ratio according to the second definition.

\subsection{M\"{o}bious homermorphisms}
Let $(k,d)$ be a compact metric space  without isolated points, and consider
M\"{o}bius self-homeomorphisms of $K$. Recall that the cross-ratio of a quadruple of distinct points in $K$ is defined as:
\[
\mathbb{B}(x,y,x',y')=\ln \frac{d(x,x')d(y,y')}{d(x,y')d(y,x')}.
\]
A homeomorphism of $K$ is called M\"{o}bious if it preserves the cross-ratio.

Following Sullivan \cite{sullivan1979density}, M\"{o}bious maps satisfy the \emph{geometric mean-value property} \cite{nica2013proper}. i.e., 

\emph{
A self homeomorphism $g$ of $K$ is M\"{o}bius if and only if there exists
a positive continuous function on $K$, denoted $g'$, such that for all $x$, $y\in K$, we have:
\[
g'(x)g'(y)=\frac{d(gx,gy)}{d(x,y)}.
\] 
}

Nica \cite{nica2013proper} has shown that the metric derivative $g'$ is Lipschitz. Moreover, the group of M\"{o}nious maps is locally compact by \cite{nica2013proper}*{Proposition 5.6}.

\subsection{Bowen-Margulis-Sullivan measure}\label{BMS} Discovered independently by Bowen and Margulis and reconstructed from the Patterson-Sullivan measure later, the Bowen-Margulis-Sullivan measure (BMS measure) of a negatively curved closed manifold contains critical geometric information. It serves as a tool to study the growth rate of primarily closed geodesics.

Furman \cite{furman2002coarse} extended the BMS measure in a coarsely geometric setting.
Here, we adopt a similar strategy. Let $\Gamma$ be a group that acts metrically properly on a good hyperbolic space $X$, with limit set $\Lambda_X \Gamma$ and of divergence type. Assuming the critical exponent $0<\delta<\infty$ and $\Lambda_X \Gamma$ is compact, we denote $v$ the Patternson-Sullivan measure on $\Lambda \Gamma$. For the construction of the Patternson-Sullivan measure, see \cite{das2017geometry}*{Theorem 15.4.9}. 

There is a $\Gamma\times \bbR$ action on $\partial^{(2)}\times \bbR$
\[
(\gamma,t)\cdot(\xi,\eta,s)=(\gamma \xi,\gamma \eta, s+\frac{\beta_\xi(p, \gamma p)-\beta_\eta(p,\gamma p)}{2}+t),
\]
where $\beta$ is the Busemann function with base point $p\in X$. A straightforward computation shows that the measure
$\BMSm{\Gamma}=d_{\epsilon}^{-\frac{2\delta}{\epsilon}}v\times v\times dt$ is $\Gamma\times \bbR$ invariant. This is the measure used in Definition~\ref{def:lattice}.

It is worth noting that it is likely the assumption of divergence type is not necessary. But we are not able to show this. Hence, all groups in this paper are of (generalized) divergence type.

\section{$2\frac{1}{2}$ lattice is a lattice}\label{lattice}
Consider a good hyperbolic space $(X,d)$ and let $\Gamma<\Isom(X)$ be a $2\frac{1}{2}$ lattice with a limit set $\Lambda \Gamma=S^{k-1}$. In this section, we establish that $\Gamma$ is virtually a lattice of a locally compact group $H$. The properties of $H$ are further analyzed in the next section.

\subsection{M\"{o}bius homomorphism and $H$}
 The cross-ratio $\mathbb B$ on $\partial X$, when restricted on the limit set $\Lambda_X \Gamma$, is $\Gamma$-invariant. Let 
\[
\operatorname{Mob}:=\setdef{f\in \Homeo(\Lambda_X \Gamma)}{f^{(4)}\mathbb B=\mathbb B}.
\]

With respect to the metric $d_\epsilon$, every $f\in \operatorname{Mob}$ is an M\"{o}bius homeomorphism.  It is important to note that when $f$ is the boundary map of an isometry $F$, the metric derivative is related to the Busemann function as follows:
\[
f'(\xi)=\exp(\epsilon \beta_\xi(p,F^{-1}p)).
\]

Now we define the group $H$, which depends on $\BMSm{\Gamma}$.

Let $v$ be the pattern-Sullivan measure on $\Lambda_X\Gamma$. We define $H$ as: 
\[
H:=\setdef{f\in\Homeo(\Lambda_X \Gamma)}{(f\times f)_*(d_\epsilon^{-\frac{2\delta}{\epsilon}}v\times v)=d_\epsilon^{-\frac{2\delta}{\epsilon}}v\times v}.
\]
By the definition of cross-ratio, all $f\in H$ preserve the cross-ratio almost everywhere with respect to the measure $v^{4}$. Since the cross-ratio is continuous, it follows that $f\in \operatorname{Mob}$. Therefore, $H$ is a closed subgroup of $\operatorname{Mob}$ and is therefore locally compact.

\subsection{$d\Gamma$ is discrete}
To prove that $d\Gamma$ is discrete in $H$, we combine the following three lemmas. Note that it is unclear that $H$ is a subgroup of $\Homeo(\partial X)$, therefore, the discreteness of $\Gamma$ as a subgroup of $\Isom(X)$ is not enough. 

Lemma 3.1 and 3.2 are well-understood for proper spaces. Since we are not restricted to locally compact spaces, we give short proofs here.

Let $d:\Isom(X,d)\rightarrow \Homeo(\partial X)$ be the boundary map.
\begin{lemma}
The kernel $F=\Ker(d)$ has a bounded orbit. 
\end{lemma}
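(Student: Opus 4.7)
The plan is to prove the lemma by showing that for any basepoint $p \in X$, the orbit $Fp$ is uniformly bounded, which is precisely the conclusion sought. The starting point is the Busemann--derivative formula recalled immediately above: if $\phi \in \Isom(X)$ has boundary map $\partial \phi$, then $(\partial\phi)'(\xi)=\exp(\epsilon\,\beta_\xi(p,\phi^{-1}p))$ for every $\xi\in\partial X$. When $\phi\in F=\Ker(d)$, we have $\partial \phi=\id$ and so $(\partial\phi)'\equiv 1$, which forces
\[
\beta_\xi(p,\phi^{-1}p)=0 \qquad \text{for every } \xi\in\partial X \text{ and every } \phi\in F.
\]

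Next I convert this vanishing into a Gromov--product identity. For $q\in X$ and $z\in X$, the algebraic identity $d(p,z)-d(q,z)=2(q,z)_p-d(p,q)$ passes to the limit $z\to\xi$ by continuity of the extended Gromov product in a good hyperbolic space, giving $\beta_\xi(p,q)=2(q,\xi)_p-d(p,q)$. Setting $q=\phi^{-1}p$ and combining with the previous display forces $(q,\xi)_p=\tfrac{1}{2}d(p,q)$ for every $\xi\in\partial X$.

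Now I invoke the hyperbolicity of $X$ on the boundary. Since the limit set $\Lambda_X\Gamma=S^{k-1}\subset\partial X$ contains at least two distinct points, fix once and for all $\xi,\eta\in\Lambda_X\Gamma$ with $\xi\neq\eta$, and set $R:=(\xi,\eta)_p<\infty$, a constant independent of $\phi$. Denoting by $\delta_0$ the hyperbolicity constant of $X$, the four--point inequality
\[
(\xi,\eta)_p \ \geq\ \min\{(\xi,q)_p,(\eta,q)_p\}-\delta_0
\]
applied with $q=\phi^{-1}p$ yields $R\geq \tfrac{1}{2}d(p,\phi^{-1}p)-\delta_0$, whence $d(p,\phi p)=d(p,\phi^{-1}p)\leq 2R+2\delta_0$ uniformly in $\phi\in F$. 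Thus $Fp$ is a bounded orbit and the lemma follows.

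The main obstacle is justifying the two passages to the boundary: that the derivative--Busemann formula holds at every $\xi\in\partial X$ (not only on $\Lambda_X\Gamma$), and that the four--point hyperbolicity inequality extends continuously to pairs of ideal endpoints. Both follow from the good hyperbolic hypothesis via the continuous extension of the Gromov product and the visual metric $d_\epsilon$ to the bordification, so the verifications should be routine, but must be made explicit before the chain of equalities and inequalities above can be invoked on $\partial X$.
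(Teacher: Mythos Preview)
Your proof is correct and follows essentially the same line as the paper: deduce from the derivative--Busemann identity that $\beta_\xi$ vanishes on orbit points of $F$, rewrite this as a Gromov--product identity, and finish with the $\delta$-hyperbolic four-point inequality. The only difference is in how the final inequality is set up: the paper applies it with \emph{one} boundary point and \emph{two} orbit points (taking $y=z=p$ and basepoint $x=f(p)$), which yields the sharper, intrinsic bound $d(f(p),p)\le 2\delta$ without needing to fix auxiliary points $\xi\ne\eta$ in $\Lambda_X\Gamma$; your version with two boundary points gives $2(\xi,\eta)_p+2\delta_0$, which is fine for the qualitative statement but depends on that choice.
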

\begin{proof}
Take $f$, $g$ and $h\in F$. Let $x=f(p)$, $y=g(p)$, $z=h(p)$. If $w_n\in X$ converge to $\xi\in\partial X$, then 
$$2\lim_n (y,w_n)_x=d(x,y)+\beta_\xi(x,y)=d(x,y)+\frac{1}{\epsilon}\ln(\frac{(g^{-1})'(\xi)}{(f^{-1})'(\xi)})=d(x,y).$$
Similarly,
$2(z,\xi)_x=2\lim_n(z,w_n)_x=d(x,z).$
By the hyperbolic inequality,
$$2(y,z)_x\geq \min\{d(x,y),d(x,z)\}-2\delta.$$
In particular, taking $g=h=\operatorname{Id}_X,$ we have $d(f(p),p)\leq 2\delta$ for all $f\in F$. 
\end{proof}
Since $\Gamma$ acts metrically properly on $X$, The kernel of the map $d:\Gamma\to d\Gamma$ is finite. The discreteness of $d \Gamma$ follows a similar strategy.
\begin{lemma}\label{L: finite}
    For any $K>0$, the set $\Omega(K):=\setdef{\gamma\in d\Gamma}{\sup_{\xi\in \partial X}|\beta_\xi(\gamma p)|\leq K}$ is finite. In particular,
    $d\Gamma$ is discrete in $\Homeo(\partial X)$. 
\end{lemma}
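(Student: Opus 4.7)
The plan is to convert the pointwise bound $|\beta_\xi(\gamma p)|\leq K$ into a uniform upper bound on the displacement $d(p,\gamma p)$, and then invoke metric properness of $\Gamma\acts X$ together with the finiteness of the kernel of $d:\Gamma\to d\Gamma$ (which is finite because the preceding lemma shows it has a bounded orbit and the $\Gamma$-action on $X$ is metrically proper).

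The key identity, obtained by unwinding the definitions of the Gromov product and the Busemann function, is
\[
  (q,\xi)_p \;=\; \tfrac{1}{2}\bigl(d(p,q) + \beta_\xi(p,q)\bigr)
\]
for any $q\in X$ and $\xi\in \partial X$. Applied with $q=\gamma p$, the assumption $\gamma\in\Omega(K)$ becomes $(\gamma p,\xi)_p \geq \tfrac{1}{2}\bigl(d(p,\gamma p)-K\bigr)$ for every $\xi\in \partial X$. Non-elementarity of $\Gamma$ (inherent in the $2\tfrac{1}{2}$-lattice hypothesis) makes $\Lambda_X\Gamma\subset \partial X$ a perfect compact set, so we may fix two distinct points $\xi,\eta\in\partial X$. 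The $\delta$-hyperbolic four-point inequality then yields
\[
  (\xi,\eta)_p \;\geq\; \min\bigl((\gamma p,\xi)_p,\,(\gamma p,\eta)_p\bigr) - \delta \;\geq\; \tfrac{1}{2}\bigl(d(p,\gamma p)-K\bigr) - \delta,
\]
so $d(p,\gamma p) \leq 2(\xi,\eta)_p + 2\delta + K$. The right-hand side is independent of $\gamma$, so metric properness bounds the set $\{\tilde\gamma\in\Gamma : d(p,\tilde\gamma p) \leq 2(\xi,\eta)_p+2\delta+K\}$ finitely; combined with the finite $d$-fibers, $|\Omega(K)|<\infty$.

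For the concluding discreteness statement, take a sequence $\gamma_n\to\id$ in $\Homeo(\partial X)$ with $\gamma_n\in d\Gamma$. Each $\gamma_n\in \operatorname{Mob}$, and the M\"obius mean-value property together with Nica's Lipschitz control on the metric derivative forces $\gamma_n'\to 1$ uniformly on $\partial X$. Via the identity $\gamma'(\xi) = \exp\bigl(\epsilon\,\beta_\xi(p,\gamma^{-1}p)\bigr)$ recorded earlier in this section, this gives $\sup_\xi|\beta_\xi(\gamma_n^{\pm 1} p)|\to 0$, so $\gamma_n\in\Omega(1)$ for all large $n$. Since $\Omega(1)$ is finite and $\gamma_n\to\id$, necessarily $\gamma_n=\id$ for large $n$.

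The only delicate point is securing two distinct boundary points to feed into the four-point inequality, and this is furnished by non-elementarity of $\Gamma$; the remainder is a direct combination of the preceding lemma, the Busemann--Gromov-product identity, hyperbolicity, and Nica's regularity theory for M\"obius maps.
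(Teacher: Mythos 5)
Your proof is correct and follows essentially the same route as the paper's: you convert the uniform Busemann bound into the lower bound $(\gamma p,\xi)_p\geq\tfrac{1}{2}\bigl(d(p,\gamma p)-K\bigr)$, feed it into the hyperbolic four-point inequality to bound $d(p,\gamma p)$ by a constant depending only on $K$, $\delta$, and $(\xi,\eta)_p$, and finish with metric properness plus the finiteness of $\ker(\Gamma\to d\Gamma)$. Your use of two \emph{distinct} boundary points $\xi\neq\eta$ is in fact the correct way to make the four-point inequality non-degenerate (the paper's written proof takes $y_0=p$, which makes its displayed inequality vacuous as stated), and your derivation of the discreteness claim via the geometric mean-value property is the same mechanism the paper deploys in its Lemma on $\Ker(\psi)$.
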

\begin{proof}
    Let $f\in\Omega(K)$. Then, for all $\xi\in\partial X$, we have
 $|\frac{1}{\epsilon}\ln (f^{-1})'(\xi)|\leq K.$
 
Fix a lift in $\Gamma$ for $f$ and denote it by $f$ again. Let $y_o=p$ $y_1=f(p)$.

    If $b_n$ is a sequence in $X$ converge to $\xi\in\partial X$. then, similar to before, for all $i=1$, $2$,
    \[
    2\lim_n(b_n,y_i)_p\geq d(y_i,p)-2K.
    \]
    Using the hyperbolic inequality, we can deduce that: 
    \[
    2(y_1,y_0)_p\geq \min\{d(p,y_0),d(p,y_1)\}-2\delta-2K.
    \]
    This is equivalent to
    \[
    d(y_1,y_0)-d(p, y_0)=d(y_1,p)\leq 2\delta+2K
    \]
    There orbit of $p$ under $\Omega(K)$ is bounded. Therefore the set has only finitely many elements.
\end{proof}

By restriction the action of $d\Gamma$ on $\partial X$ to $\Lambda_X\Gamma$, we obtain a map $\psi: d\Gamma\to H$.
\begin{lemma}\label{L: finite in H}
 $\Ker(\psi)$ is finite and $\psi(d\Gamma)$ is discrete in $H$.
\end{lemma}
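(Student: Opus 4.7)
I will handle the two claims in parallel, both reducing to the Busemann bound exploited in Lemma~\ref{L: finite} combined with the geometric mean-value property for M\"obius homeomorphisms of $(\Lambda_X\Gamma,d_\epsilon)$. The key observation, made already in the text above, is that for $\gamma\in\Gamma$ one has $(d\gamma)'(\xi)=\exp(\epsilon\,\beta_\xi(p,\gamma^{-1}p))$ on $\Lambda_X\Gamma$, so control on the metric derivative translates directly into control on the Busemann function at $p$.

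\textbf{Finiteness of $\Ker(\psi)$.} Pick $f=d\gamma\in\Ker(\psi)$, so that $d\gamma$ fixes $\Lambda_X\Gamma$ pointwise. Since $\gamma$ is an isometry of $X$, it preserves the boundary cross-ratio, so $d\gamma|_{\Lambda_X\Gamma}$ is M\"obius. For any two distinct $\xi,\eta\in\Lambda_X\Gamma$ the mean-value property yields
\[
(d\gamma)'(\xi)\,(d\gamma)'(\eta)=\frac{d_\epsilon(d\gamma\,\xi,d\gamma\,\eta)}{d_\epsilon(\xi,\eta)}=1.
\]
Fixing $\eta$ and varying $\xi$ shows that $(d\gamma)'$ is a positive constant $c$ with $c^{2}=1$, hence $c=1$. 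The identity above then forces $\beta_\xi(p,\gamma^{-1}p)=0$ for every $\xi\in\Lambda_X\Gamma$. Since $\Gamma$ is non-elementary, $\Lambda_X\Gamma$ is non-empty; applying to any fixed $\xi_0\in\Lambda_X\Gamma$ the Gromov-product/Busemann estimate from the proof of Lemma~\ref{L: finite} (which only uses one boundary point) gives $d(\gamma p,p)\leq 2\delta$. By metric properness of the $\Gamma$-action, there are only finitely many such $\gamma$, hence $\Ker(\psi)$ is finite.

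\textbf{Discreteness of $\psi(d\Gamma)$.} It suffices to show that any sequence with $\psi(d\gamma_n)\to\mathrm{id}$ in $H$ is eventually equal to $\mathrm{id}$. The topology on $H\subset\operatorname{Mob}$ is the subspace topology from $\operatorname{Homeo}(\Lambda_X\Gamma)$, and because $\Lambda_X\Gamma$ is compact this agrees with uniform convergence. Hence $d_\epsilon(d\gamma_n\xi,d\gamma_n\eta)\to d_\epsilon(\xi,\eta)$ uniformly on pairs, and the mean-value property gives $(d\gamma_n)'(\xi)(d\gamma_n)'(\eta)\to 1$ for every $\xi\neq\eta$. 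Taking $\eta$ close to $\xi$ and using positivity yields $(d\gamma_n)'(\xi)\to 1$ pointwise, i.e.\ $\beta_\xi(p,\gamma_n^{-1}p)\to 0$ for each $\xi\in\Lambda_X\Gamma$. Fix $\xi_0\in\Lambda_X\Gamma$: for $n$ large $|\beta_{\xi_0}(p,\gamma_n^{-1}p)|\leq 1$, and the one-boundary-point form of Lemma~\ref{L: finite} gives $d(\gamma_n p,p)\leq 2\delta+2$. Metric properness then confines the tail of $\{\gamma_n\}$ to a finite subset of $\Gamma$, and since $H$ is Hausdorff, $\psi(d\gamma_n)=\mathrm{id}$ from some $n$ on.

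\textbf{Expected obstacle.} The calculation inside Lemma~\ref{L: finite} is stated using a supremum of $|\beta_\xi|$ over all of $\partial X$, whereas here I only have control on $\Lambda_X\Gamma$. Re-reading that proof shows that only a single boundary point is ever used, so as long as $\Lambda_X\Gamma\neq\emptyset$ the argument carries over verbatim with $\xi\in\Lambda_X\Gamma$. The other possibly delicate point---passing from uniform convergence of the maps $d\gamma_n$ on $\Lambda_X\Gamma$ to pointwise convergence of the metric derivatives $(d\gamma_n)'$---is handled cleanly by the M\"obius mean-value identity, without invoking any further analytic regularity (such as Nica's Lipschitz bound on the derivatives, which would only be needed if uniform convergence of derivatives were required).
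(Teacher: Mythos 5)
Your reduction of both claims to control of the metric derivative on $\Lambda_X\Gamma$ via the mean-value property is sound and close in spirit to the paper's argument (which likewise converts convergence to the identity in $H$ into two-sided bounds on the derivatives and then invokes Lemma~\ref{L: finite}). However, the step you flag as the ``expected obstacle'' is exactly where the proof breaks: it is \emph{not} true that a bound $|\beta_{\xi_0}(\gamma p,p)|\le K$ at a \emph{single} boundary point $\xi_0$ forces $d(\gamma p,p)\le 2\delta+2K$. The set $\setdef{x}{\beta_{\xi_0}(x,p)=0}$ is the horosphere through $p$ centred at $\xi_0$, which is unbounded whenever $\partial X$ has more than one point (in $\HH^2$ with $\xi_0=\infty$ and $p=i$, the points $t+i$ lie on this horosphere and escape to infinity). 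The estimate you want genuinely requires \emph{two} boundary points $\xi\ne\eta$ together with an upper bound on $(\xi,\eta)_p$: from $2(\xi,\eta)_x\ge\min\{2(\xi,p)_x,2(\eta,p)_x\}-2\delta$ and $(\xi,\eta)_x=(\xi,\eta)_p+\tfrac12\bigl(\beta_\xi(x,p)+\beta_\eta(x,p)\bigr)$ one gets $d(x,p)\le 2(\xi,\eta)_p+3K+2\delta$. So the one-point form you invoke at the end of both halves is false, and each half of your proof terminates in an unjustified inequality.

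The gap is repairable with information you already have: in the kernel case you show $(d\gamma)'\equiv 1$, hence $\beta_\xi(p,\gamma^{-1}p)=0$ for \emph{every} $\xi\in\Lambda_X\Gamma$, and in the discreteness case $(d\gamma_n)'(\xi)\to1$ at every point; since $\Gamma$ is non-elementary you may fix two distinct $\xi_0,\eta_0\in\Lambda_X\Gamma$ and run the two-point estimate above, then finish by metric properness exactly as you do. With that repair your route is in fact a mild streamlining of the paper's: the paper first propagates the derivative bounds from $\Lambda_X\Gamma$ to an arbitrary $\theta\in\partial X$ (via the mean-value property and a pair $\eta,\zeta\in\Lambda_X\Gamma$) so as to apply Lemma~\ref{L: finite}, whose hypothesis is a supremum over all of $\partial X$, whereas you only ever need two separated points of the limit set. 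But as written, the proposal's concluding step in both parts is incorrect.
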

\begin{proof}
 Assume there is a sequence of elements $\{\gamma_n\}$ so that $\psi(\gamma_n)$ converges to the identity map. After taking a subsequence, we may assume that for all $\xi\in \Lambda_X\Gamma$,
 $\frac{1}{2}\leq(\gamma_n)'(\xi)\leq 2.$
 
Let $\eta\neq\zeta\in \Lambda_X\Gamma$. By the geometric mean-value property, we have:
\[
\frac{1}{4}\leq\frac{d_\epsilon(\gamma_n\eta,\gamma_n\zeta)}{d_\epsilon(\eta,\zeta)}\leq 4.
\]
Consider any point $\theta\in\partial X$. By the triangle inequality and the definition of the visual metric, we may assume (change $\eta$ to $\zeta$ if necessary): 
\[\frac{d_\epsilon(\eta,\zeta)}{8}\leq\frac{d_\epsilon(\gamma_n\eta, \gamma_n\zeta)}{2}\leq d_\epsilon(\gamma_n\theta,\gamma_n\eta)\leq 1.\]

Using the geometric mean-value property, we obtain the following:
\[
\gamma_n'(\theta)=\frac{1}{\gamma_n'(\eta)}\frac{d_\epsilon^2(\gamma_n\theta,\gamma_n\eta)}{d_\epsilon^2(\theta,\eta)}\geq \frac{1}{2}\frac{d_\epsilon^2(\eta,\zeta)}{64}.
\]
Similarly, 
\[
\gamma_n'(\theta)\leq \frac{2}{d_\epsilon^2(\eta,\zeta)}.
\]
By Lemma~\ref{L: finite}, the sequence $\{\gamma_n\}$ contains only finitely many different elements and eventually constant.
\end{proof}

The proof of Lemma~\ref{L: finite in H} gives a useful criterion for precompactness.
\begin{lemma}\label{precompact}
    Let $A$ be a subset of $H$, if there exist two points  $\xi\neq \eta\Lambda_X\Gamma$ such that
    \begin{enumerate}
        \item there exist $\kappa>0$ such that $d(\gamma \xi,\gamma \eta)\geq \kappa$,
        \item there exist $M>0$ such that $\frac{1}{M}\leq \gamma'(\xi)\leq M$
    \end{enumerate}
    for all $\gamma\in A$. The $A$ is precompact.
\end{lemma}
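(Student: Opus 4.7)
The plan is to establish that every $\gamma\in A$ is uniformly bilipschitz on the compact metric space $(\Lambda_X\Gamma, d_\epsilon)$, then invoke Arzelà-Ascoli to extract convergent subsequences. Since $H$ is closed in $\operatorname{Mob}$ and the cross-ratio is continuous in the uniform topology, any uniform limit of elements of $H$ will lie in $H$, so this suffices.

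First, from the geometric mean-value property
\[
\gamma'(\xi)\gamma'(\eta) = \frac{d_\epsilon(\gamma\xi,\gamma\eta)}{d_\epsilon(\xi,\eta)}
\]
together with hypotheses (1) and (2), I derive two-sided bounds on $\gamma'(\eta)$ depending only on $M$, $\kappa$, $d_\epsilon(\xi,\eta)$, and the diameter $D$ of $\Lambda_X\Gamma$. Given an arbitrary $\theta\in\Lambda_X\Gamma$, the triangle inequality for $d_\epsilon$ combined with (1) forces at least one of $d_\epsilon(\gamma\theta,\gamma\xi)$ or $d_\epsilon(\gamma\theta,\gamma\eta)$ to be at least $\kappa/2$; applying the mean-value identity to the appropriate pair yields a uniform lower bound $\gamma'(\theta) \geq c_1 > 0$, essentially reproducing the estimate in the proof of Lemma \ref{L: finite in H}.

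For the upper bound I would apply the symmetric argument to $\gamma^{-1}$, using the fixed reference points $\xi = \gamma^{-1}(\gamma\xi)$ and $\eta = \gamma^{-1}(\gamma\eta)$, which remain at uniform distance $d_\epsilon(\xi,\eta)$ apart. Combining $(\gamma^{-1})'(\gamma\xi) = 1/\gamma'(\xi)$ and $(\gamma^{-1})'(\gamma\eta) = 1/\gamma'(\eta)$ with the previously established bounds yields a uniform lower bound $(\gamma^{-1})'(\alpha) \geq c_2 > 0$ for all $\alpha\in\Lambda_X\Gamma$, which via $\alpha = \gamma\theta$ translates to a uniform upper bound $\gamma'(\theta) \leq 1/c_2$.

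Once both $\gamma$ and $\gamma^{-1}$ are uniformly bilipschitz on $\Lambda_X\Gamma$, the families $\{\gamma\}_{\gamma\in A}$ and $\{\gamma^{-1}\}_{\gamma\in A}$ are equicontinuous. By Arzelà-Ascoli, any sequence in $A$ admits a subsequence $(\gamma_{n_k})$ with both $\gamma_{n_k}$ and $\gamma_{n_k}^{-1}$ converging uniformly to continuous maps $f$ and $g$; the relations $f\circ g = g\circ f = \id$ then force $f$ to be a homeomorphism preserving the cross-ratio, and closedness of $H$ in $\operatorname{Mob}$ places $f$ in $H$. The main technical obstacle is the symmetric step for the upper bound: one must pass the estimates through $\gamma^{-1}$ while keeping all constants dependent only on the fixed data $(M,\kappa,D,d_\epsilon(\xi,\eta))$ and not on the particular $\gamma\in A$.
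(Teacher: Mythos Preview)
Your approach is correct and matches the paper's, which gives no separate proof but simply points back to the estimates in the proof of Lemma~\ref{L: finite in H}; uniform two-sided bounds on $\gamma'$ together with the closedness of $H$ in $\operatorname{Mob}$ yield precompactness exactly as you outline. The step you flag as the main obstacle is in fact no harder than the lower bound: since at least one of $d_\epsilon(\theta,\xi)$, $d_\epsilon(\theta,\eta)$ is at least $d_\epsilon(\xi,\eta)/2$, applying the mean-value identity with that reference point gives $\gamma'(\theta)\le C$ directly, so the detour through $\gamma^{-1}$ is unnecessary.
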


\subsection{$d\Gamma$ is a lattice}
We are now ready to prove the main theorem of this section.
\begin{theorem}
The image $\psi(d\Gamma)$ is a lattice of $H$.
\end{theorem}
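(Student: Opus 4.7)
The plan is to match the Haar measure on the locally compact group $H$ with the BMS measure on $\Lambda_X\Gamma^{(2)}\times\bbR$, and then use the $2\tfrac{1}{2}$-lattice hypothesis together with the finiteness of $\ker\psi$ (Lemma~\ref{L: finite in H}) to conclude that $\psi(d\Gamma)\backslash H$ has finite Haar covolume. Combined with the discreteness of $\psi(d\Gamma)$ already established in Lemma~\ref{L: finite in H}, this makes $\psi(d\Gamma)$ a lattice.

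First I would extend the $H$-action on $\Lambda_X\Gamma$ to an action of $H$ on $\Lambda_X\Gamma^{(2)}\times\bbR$ via the metric derivative, setting
\[
h\cdot(\xi,\eta,s)=\Bigl(h\xi,\ h\eta,\ s+\tfrac{1}{2\epsilon}\bigl(\log h'(\xi)-\log h'(\eta)\bigr)\Bigr).
\]
The chain rule $(hk)'(\xi)=h'(k\xi)\,k'(\xi)$, a direct consequence of the geometric mean-value property for M\"obius maps, is precisely the cocycle identity making this a group action. On $\psi(\Gamma)$ it agrees with the $\Gamma$-action from Section~\ref{BMS}, using the identity $f'(\xi)=\exp(\epsilon\,\beta_\xi(p,F^{-1}p))$. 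Invariance of $\BMSm{\Gamma}=d_\epsilon^{-2\delta/\epsilon}v\times v\times \mathrm{d}t$ under this extended $H$-action is immediate: the first factor is $H$-invariant by the very definition of $H$, and the $\bbR$-coordinate is only translated, so invariance of $\mathrm{d}t$ under translation completes the check.

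Next, fix a base point $o=(\xi_0,\eta_0,0)$. The stabilizer $H_o$ consists of $h\in H$ fixing $\xi_0$ and $\eta_0$ pointwise and satisfying $h'(\xi_0)=h'(\eta_0)$; but the geometric mean-value property forces $h'(\xi_0)\,h'(\eta_0)=d_\epsilon(h\xi_0,h\eta_0)/d_\epsilon(\xi_0,\eta_0)=1$, so $h'(\xi_0)=1$ on $H_o$. Lemma~\ref{precompact} then applies with $M=1$ and $\kappa=d_\epsilon(\xi_0,\eta_0)$, so $H_o$ is precompact; being closed, it is compact. The orbit map descends to a continuous bijection $H/H_o\to H\cdot o\subset \Lambda^{(2)}\times\bbR$, and because $H$ is locally compact $\sigma$-compact with $H_o$ compact, this is a homeomorphism onto a locally closed image. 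The pushforward of Haar on $H$ is an $H$-invariant Radon measure on $H\cdot o$, and by uniqueness of such measures (on a transitive $H$-space with compact stabilizer) it is proportional to the restriction of $\BMSm{\Gamma}$ to the orbit.

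Finally, the $2\tfrac{1}{2}$-lattice hypothesis gives $\BMSm{\Gamma}(\Gamma\backslash(\Lambda^{(2)}\times\bbR))<\infty$; since the kernels of $\Gamma\to d\Gamma$ (from the earlier lemma on $F$) and of $\psi$ (Lemma~\ref{L: finite in H}) are both finite, this passes to finiteness of the BMS-mass on $\psi(d\Gamma)\backslash(\Lambda^{(2)}\times\bbR)$, which via the measure identification of the previous step is finite Haar volume of $\psi(d\Gamma)\backslash H$. The main obstacle I anticipate is justifying that the orbit $H\cdot o$ carries full BMS-mass on $\Lambda^{(2)}\times\bbR$, which is what makes the uniqueness-of-invariant-measure argument in the previous step meaningful; I expect to do this by showing $H$ acts transitively (or at least with an open, conull orbit) on $\Lambda^{(2)}\times\bbR$, using the goodness of $X$ to produce enough M\"obius maps together with the support properties of the Patterson--Sullivan measure $v$ on $\Lambda_X\Gamma$.
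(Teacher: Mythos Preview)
Your overall strategy---extend the $H$-action to $(\Lambda_X\Gamma)^{(2)}\times\bbR$ via the metric-derivative cocycle, check that $\BMSm{\Gamma}$ is $H$-invariant, observe that point stabilizers are compact, and then transfer the finite BMS mass on the $\Gamma$-quotient to finite Haar mass on $\psi(d\Gamma)\backslash H$---is exactly the paper's strategy, and your treatment of the stabilizer via Lemma~\ref{precompact} is correct. The divergence is precisely where you flag it: you want a single conull $H$-orbit so that uniqueness of invariant measure on $H/H_o$ does all the work. That step is a genuine gap. There is no reason to expect $H$ to act transitively or even with a conull orbit on $(\Lambda_X\Gamma)^{(2)}\times\bbR$: in case~(1) of Theorem~\ref{T: Structure 1} the group $H$ is discrete, hence certainly not transitive on an uncountable space, and more generally goodness of $X$ produces no M\"obius maps beyond those coming from actual rough isometries---and elements of $H$ must in addition preserve the Patterson--Sullivan class, which is a further constraint. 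So the ``enough M\"obius maps'' heuristic does not go through.

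The paper sidesteps transitivity entirely by disintegrating. Since the $H$-action is proper, the orbit space $Y=(\Lambda_X\Gamma)^{(2)}\times\bbR/H$ is a locally compact second countable space. Push the finite BMS measure on $\bar Y=(\Lambda_X\Gamma)^{(2)}\times\bbR/\Gamma$ forward to $Y$ and disintegrate along the fibres $p^{-1}(y)\cong \Gamma\backslash H/K_y$. Lifting the conditional measures gives $\Gamma$-invariant Radon measures $\nu_y$ on each $H/K_y$, and these assemble into a disintegration of the (upstairs) BMS measure over $Y$. Because BMS is $H$-invariant, for $\nu$-a.e.\ $y$ the measure $\nu_y$ is $H$-invariant, hence a Haar-type measure on $H/K_y$; its $\Gamma$-quotient is a probability measure, so $\Gamma\backslash H/K_y$ has finite invariant mass and therefore $\psi(d\Gamma)$ is a lattice in $H$. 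This argument needs no information about how many $H$-orbits there are; replacing your transitivity attempt with this disintegration step completes your proof.
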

\begin{proof}
In this proof, for simplicity, we assume $\Gamma=\psi(d\Gamma)$. 
 
 Similar as in \cite{furman2002coarse}, consider the abstract geodesic flow 
 \[H\rightarrow \Homeo((\Lambda_X\Gamma)^{(2)}\times \bbR),\] 
\[
f(x,y,t)=(f(x),f(y),t+\frac{\log f'(x)-\log f'(y)}{2\delta_\Gamma}).
\]
Then the $H$ action is proper. 

First, the quotient space $Y=(\Lambda_X\Gamma)^{(2)}\times \bbR/H$ is locally compact and second countable. The second countability is clear, and for local compactness, we can fix any point $y$ in $Y$ and a pre-image of $y$, say $x$. Taking a compact neighborhood $C$ of $x$, Since the action is proper, we see that the projection map restricted to $C$ is continuous, hence yielding a compact neighborhood of $y$.

By definition, there exists a probability measure $\BMSm{\Gamma}$ on $\bar{Y}=(\Lambda_X\Gamma)^{(2)}\times \bbR/\Gamma$. Let $p: \bar{Y}\rightarrow Y$ be the natural projection, and $\nu=p_*(\BMSm{\Gamma})$. Since $Y$ is a Randon space, there is disintegration $\setdef{\mu_y}{y\in Y}$ on the fibre of $p$. The fiber of $y\in Y$ is $\Gamma \backslash H/K_y$, where $K_y$ is the compact subgroup of $H$ fixing $y$. For a general point $y$, $\mu_y$ induces a $\Gamma$-invariant measure $\nu_y$ on $H/K_y$, and provides a disintegration of $(\Lambda_X\Gamma)^{(2)}\rightarrow Y$.

    \centerline{\xymatrix{H/K_y\ar[d]\ar[r]&(\Lambda_X\Gamma)^{(2)}\times \bbR\ar[d]\ar[r]&Y\ar[d]^{=}\\
\Gamma\backslash H/K_y\ar[r]& \bar{Y}\ar[r]&Y
}}

Since $\BMSm{\Gamma}$ is Randon, $\nu_y$ is Randon. The fact that $H$ preserve $\BMSm{\Gamma}$ ensures that $v_y$ is $H$-invariant $v$-almost surely. Hence $\nu_y$ is Haar on $H/K_y$ and $\Gamma$ is a lattice in $H$.
\end{proof}

\section{Structure of $H$}
The structure of the group $H$ can be determined by studying the property of the group $\Gamma$. 

The amenable radical of a locally compact second countable group $G$, denoted as $\mathrm{Rad_{am}}(G)$, is the largest closed normal amenable subgroup of $G$. The starting point for the structure of $H$ is a consequence of Hilbert’s fifth problem, which was first observed by Burger and Monod \cite{burger2001continuous}.
\begin{theorem}\cite{burger2001continuous}*{Theorem 3.3.3}\label{T: Stucture}
Every locally compact group $G$ contains an open, normal, finite-index subgroup $G_0$
that contains $\mathrm{Rad_{am}}(G)$. The quotient $G_0/ \mathrm{Rad_{am}}(G)$ is topologically isomorphic to a direct product of a connected, center-free, semisimple, real Lie group without
compact factors and a totally disconnected locally compact group with trivial amenable radical.
\end{theorem}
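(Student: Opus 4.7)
The plan is a three-step structural reduction: apply the Gleason--Yamabe theorem to replace the identity component of $G$ by a Lie group, use Levi--Malcev to identify $\mathrm{Rad_{am}}(G^0)$, and then split off a totally disconnected complement using the finiteness of the outer automorphism group of the semisimple quotient.

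First I would set $G^0$ to be the identity component of $G$. Gleason--Yamabe (the resolution of Hilbert's fifth problem for connected groups) produces arbitrarily small compact normal subgroups $K \lhd G^0$ with $L := G^0/K$ a connected real Lie group. Every such $K$ is amenable and hence lies in $\mathrm{Rad_{am}}(G^0) \subseteq \mathrm{Rad_{am}}(G)$. On the Lie quotient, Levi--Malcev gives $L = R \rtimes S$ with $R$ solvable and $S$ semisimple, and the amenable radical of $L$ is $R$ together with the compact simple factors and the finite center of $S$. Pulling back and verifying that $\mathrm{Rad_{am}}(G^0) = \mathrm{Rad_{am}}(G) \cap G^0$ (where the nontrivial containment uses that $\mathrm{Rad_{am}}(G) \cap G^0$ is closed, normal, and amenable in $G^0$), one obtains that $\bar S := G^0/\mathrm{Rad_{am}}(G^0)$ is a center-free, connected, semisimple real Lie group with no compact factors.

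Next I would construct $G_0$ via the conjugation action. Consider the continuous homomorphism $G \to \Aut(\bar S)$. Because $\bar S$ is center-free, $\Aut(\bar S)$ is itself a real Lie group with identity component $\Inn(\bar S) \cong \bar S$, and $\Out(\bar S)$ is finite, since it injects into the automorphism group of the Dynkin diagram. Define $G_0$ to be the preimage in $G$ of $\Inn(\bar S)$; this is open, normal, of finite index in $G$, and contains both $G^0$ and $\mathrm{Rad_{am}}(G)$. Inside $G_0/\mathrm{Rad_{am}}(G)$ every element acts on $\bar S$ as an inner automorphism, so each coset has a representative in the centralizer $C$ of $\bar S$. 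Center-freeness forces $\bar S \cap C = \{e\}$, and the multiplication map $\bar S \times C \to G_0/\mathrm{Rad_{am}}(G)$ is then a continuous bijective homomorphism between locally compact second-countable groups, hence a topological isomorphism by the open mapping theorem. The complement $C$ is totally disconnected (its identity component would have to lie inside $\bar S$) and has trivial amenable radical, since any closed normal amenable subgroup of $C$ is normal in the whole product and would pull back to a normal amenable subgroup of $G_0$ missed by $\mathrm{Rad_{am}}(G)$.

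The principal obstacle is the invocation of Gleason--Yamabe itself, which is the deep analytic input of the proof; granting that, the remainder is structural. The subtlest technical point is the open-mapping argument promoting the algebraic splitting $\bar S \cdot C$ into a topological direct product, and a secondary subtlety is the equality $\mathrm{Rad_{am}}(G^0) = \mathrm{Rad_{am}}(G) \cap G^0$, which is precisely what makes $\bar S$ the identity component of the quotient $G_0/\mathrm{Rad_{am}}(G)$.
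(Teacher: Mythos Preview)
The paper does not prove this statement at all: it is quoted verbatim from Burger--Monod \cite{burger2001continuous}*{Theorem~3.3.3} and used as a black box structural input for the analysis of $H$. There is therefore no ``paper's own proof'' to compare your proposal against.

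That said, your sketch is essentially the standard argument (and the one Burger--Monod give): Gleason--Yamabe reduces the identity component modulo a compact normal subgroup to a connected Lie group, Levi--Malcev identifies the amenable radical there, and the finiteness of $\Out(\bar S)$ for a center-free semisimple Lie group $\bar S$ yields the finite-index open subgroup $G_0$ together with the centralizer splitting. Two small technical points you should tighten if you want a self-contained proof: (i) the open mapping theorem you invoke for the bijection $\bar S\times C\to G_0/\mathrm{Rad_{am}}(G)$ requires $\sigma$-compactness, which the theorem as stated does not assume---one typically either adds a second-countability hypothesis or argues directly that the map is open using that $\bar S$ is open in the quotient; (ii) for the equality $\mathrm{Rad_{am}}(G^0)=\mathrm{Rad_{am}}(G)\cap G^0$ you also need the reverse inclusion, which follows because $\mathrm{Rad_{am}}(G^0)$ is characteristic in $G^0$ and hence normal in $G$.
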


To understand the structure of $H$, we focus on its amenable radical first, which is related to the (CAF) condition of $\Gamma$. The (CAF) condition states that every amenable commensurated subgroup of $\Gamma$ is finite.

We utilize barycenter methods to show that $\Gamma$ satisfies the (CAF) condition.  Given a probability measure  $v$ on $\Lambda_X\Gamma$, the $L$-barycenter $B_v(L)$ of $v$ is defined as the subset of $X$ such that:
 \[
\setdef{x\in X}{\int_{\xi\in\partial X}\beta_\xi(x,p)dv\leq\inf_{y\in X}\{\int_{\xi\in\partial X}\beta_\xi(y,p)dv\}+L}
\]

\begin{lemma}\label{L: Barycenter}
If $v$ has no atoms, the set $B_v(L)$ is bounded for all $L>0$.
\end{lemma}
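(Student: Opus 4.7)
The natural approach is proof by contradiction: suppose $B_v(L)$ is unbounded and choose $x_n\in B_v(L)$ with $d(p,x_n)\to\infty$. Taking $x=p$ shows that $\int_{\partial X}\beta_\xi(p,p)\,dv = 0$, so the infimum in the definition of $B_v(L)$ is at most $0$, hence finite, and the quantities $\int_{\partial X}\beta_\xi(x_n,p)\,dv$ are uniformly bounded above by some constant $M$. I would first record that in a good hyperbolic space the Gromov product $(\cdot,\cdot)_p$ extends continuously to $X\cup\partial X$, so the elementary identity $d(x,z)-d(p,z) = d(p,x)-2(x,z)_p$ passes to the limit $z\to\xi$ and gives the exact formula $\beta_\xi(x,p) = d(p,x) - 2(x,\xi)_p$. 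Integrating against $v$ and rearranging yields
\[
\int_{\Lambda_X\Gamma}(x_n,\xi)_p\,dv(\xi) \;\geq\; \frac{d(p,x_n)-M}{2},
\]
so this integral diverges as $n\to\infty$.

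Combined with the trivial bound $(x_n,\xi)_p\leq d(p,x_n)$, a Markov-type estimate on the sub-level set
\[
A_n:=\{\xi\in\Lambda_X\Gamma:(x_n,\xi)_p\geq d(p,x_n)/4\}
\]
gives $v(A_n)\geq 1/5$ for all $n$ large (since $\int_{A_n^c}(x_n,\xi)_p\,dv \leq d(p,x_n)/4$ forces $v(A_n)\cdot d(p,x_n) \geq d(p,x_n)/4 - M/2$). By definition of the visual metric, every $\xi\in A_n$ satisfies $d_\epsilon(x_n,\xi) = e^{-\epsilon(x_n,\xi)_p}\leq e^{-\epsilon d(p,x_n)/4}$, so the triangle inequality for $d_\epsilon$ (valid since $X$ is good hyperbolic) forces $\diam_{d_\epsilon}(A_n) \leq 2 e^{-\epsilon d(p,x_n)/4} \to 0$.

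The last step is to exploit compactness of $\Lambda_X\Gamma$: pick $\xi_n\in A_n$ and, after passing to a subsequence, assume $\xi_n\to\zeta\in\Lambda_X\Gamma$. The diameter estimate then places $A_n \subset \overline{B_{d_\epsilon}(\zeta,r_n)}$ for some $r_n\to 0$, and continuity of the finite measure $v$ along the decreasing intersection $\bigcap_n \overline{B_{d_\epsilon}(\zeta,r_n)} = \{\zeta\}$ yields
\[
v(\{\zeta\}) \;=\; \lim_n v\!\left(\overline{B_{d_\epsilon}(\zeta,r_n)}\right) \;\geq\; \liminf_n v(A_n) \;\geq\; 1/5,
\]
contradicting the assumption that $v$ has no atoms.

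The main technical point is the exact identity $\beta_\xi(x,p)=d(p,x)-2(x,\xi)_p$, which in a general Gromov hyperbolic space would hold only up to an additive $\delta$-error; the good hyperbolic hypothesis eliminates this error by making the Gromov product continuous at the boundary, and that exactness is what lets the Markov/compactness argument upgrade the divergence of $\int (x_n,\xi)_p\,dv$ into an atom of positive mass rather than a mere estimate with a loss.
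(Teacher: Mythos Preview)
Your argument is correct and follows essentially the same route as the paper: both isolate the super-level set $\{\xi:(x,\xi)_p\geq c\,d(p,x)\}$, show it has vanishing $d_\epsilon$-diameter as $d(p,x)\to\infty$, and then invoke compactness together with non-atomicity of $v$; the paper runs this directly (the level set has small $v$-measure, hence $\int\beta_\xi(x,p)\,dv\to\infty$), while you run the contrapositive and manufacture an atom.

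One small correction: you invoke the triangle inequality for $d_\epsilon$ with vertex $x_n\in X$, but $d_\epsilon$ is only asserted to be a metric on $\partial X$, not on the whole bordification (indeed $d_\epsilon(x,x)=e^{-\epsilon d(p,x)}\neq 0$ for $x\in X$). The diameter bound on $A_n$ is still correct, but the clean justification is the hyperbolic inequality for the Gromov product, $(\xi,\eta)_p\geq\min\{(\xi,x_n)_p,(x_n,\eta)_p\}-\delta$, which passes to the boundary by continuity and gives $d_\epsilon(\xi,\eta)\leq e^{\epsilon\delta}e^{-\epsilon d(p,x_n)/4}$ for $\xi,\eta\in A_n$; this is exactly what the paper uses. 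Your closing remark slightly overstates the role of the good-hyperbolic hypothesis here: even with the usual $O(\delta)$ slack in the identity for $\beta_\xi$, the Markov/compactness step would go through unchanged.
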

\begin{proof}
    Let $F_v: X\to \bbR$ be the map defined as:
\[F_v(x)=2\int_{\xi\in\partial X}\beta_\xi(x,p) dv.\]
It suffices to show that $F_v(x)\to\infty$ as $d(x,p)\to \infty.$  

Rewrite \[
F_v(x)=\int_{\xi\in\partial X}\left(d(x,p)-2(\xi,x)_p\right)dv=d(x,p)-2\int_{\xi\in\partial X}(\xi,x)_pdv.
\] 
For each $x\in X$, let
\[
E(x):=\setdef{\xi\in \Lambda_X\Gamma}{(\xi,x)_p\geq \frac{d(x,p)}{3}}.
\]
Then
\[
2\int_{\xi\in \Lambda_X\Gamma}(x,\xi)_pdv\leq 2d(x,p)v(E(x))+\frac{2d(x,p)}{3}(1-v(E(x))).
\]
Let $\xi\in E(x)$, then by hyperbolicity,
\[
E(x)\subset \setdef{\zeta\in \Lambda_X\Gamma}{(\zeta,\xi)_p\geq\frac{d(x,p)}{3}-\delta}.
\]
Since $\Lambda_X\Gamma$ is compact, and $v$ has no atoms, there exist a $K>0$ so that $v(E(x))\leq \frac{1}{8}$ when $d(x,p)\geq K$. It follows that for all $x$ with $d(x,p)\geq K$, $F_v(x)\geq \frac{1}{6}d(x,p)$. 

This completes the proof.
\end{proof}
\begin{lemma}
$\Gamma$ satisfies the (CAF) condition.
\end{lemma}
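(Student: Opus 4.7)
The plan is to apply the barycenter method from Lemma~\ref{L: Barycenter} to an $N$-invariant probability measure on the compact limit set $\Lambda_X\Gamma$, handling the atomic and non-atomic cases separately. Let $N$ be an amenable commensurated subgroup of $\Gamma$. Since $N$ is amenable and acts continuously on the compact set $\Lambda_X\Gamma$, there is at least one $N$-invariant Borel probability measure $\nu$ on $\Lambda_X\Gamma$.

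In the non-atomic case I would work with the function $F_\nu(x)=2\int\beta_\xi(x,p)\,d\nu(\xi)$ appearing in the proof of Lemma~\ref{L: Barycenter}, whose sublevel set $B_\nu(L)$ is bounded by that lemma. Using the Busemann cocycle relation $\beta_\xi(\gamma x,p)=\beta_{\gamma^{-1}\xi}(x,p)+\beta_\xi(\gamma p,p)$ for $\gamma\in N\subset\Isom(X)$, together with the $N$-invariance of $\nu$, a short change of variable gives $F_\nu\circ\gamma=F_\nu+c(\gamma)$ for a constant $c(\gamma)\in\bbR$. Since $\gamma$ is a bijection of $X$, the infima of $F_\nu$ and $F_\nu\circ\gamma$ coincide and force $c(\gamma)=0$, so $N$ preserves the bounded set $B_\nu(L)$. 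Any orbit of $N$ on $B_\nu(L)$ is therefore bounded, and the metric properness of the $\Gamma$-action on $X$ immediately yields that $N$ is finite.

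If $\nu$ admits atoms, the (finite) collection of atoms of maximal mass is $N$-invariant, so a finite-index subgroup of $N$ fixes each of finitely many points on $\partial X$. Suppose for contradiction that $N$ is infinite; then the classical trichotomy for amenable subgroups of $\Isom(X)$ with $X$ Gromov hyperbolic shows that $\Lambda(N)$ is a nonempty subset of $\partial X$ with $|\Lambda(N)|\le 2$. Commensurable subgroups share the same limit set, and $\Lambda(\gamma N\gamma^{-1})=\gamma\Lambda(N)$ for every $\gamma\in\Gamma$, so $\Lambda(N)$ is a nonempty finite $\Gamma$-invariant subset of the infinite sphere $\Lambda_X\Gamma=S^{k-1}$. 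This contradicts the minimality of the $\Gamma$-action on its limit set, so $N$ must be finite after all.

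I expect the non-atomic case to be the cleaner half, amounting to little more than Lemma~\ref{L: Barycenter} combined with the infimum trick needed to kill the cocycle $c(\gamma)$. The main obstacle I see is the atomic case, where one must carefully invoke three standard but non-trivial facts about hyperbolic dynamics — cardinality bound $|\Lambda|\le 2$ for amenable subgroups, commensurability invariance of the limit set, and minimality of the $\Gamma$-action on $\Lambda_X\Gamma$ — in order to upgrade a finite $N$-invariant subset of the boundary into a contradiction with the non-elementarity of $\Gamma$.
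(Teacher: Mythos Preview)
Your non-atomic case is exactly the paper's Case~2; the cocycle computation $F_\nu\circ\gamma=F_\nu+c(\gamma)$ with $c(\gamma)=0$ is precisely why the paper can assert without further comment that $N$ preserves $B_\nu(1)$.

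In the atomic case you and the paper part ways. The paper never introduces $\Lambda(N)$. Instead it observes that the set of points of $\Lambda_X\Gamma$ with finite $N$-orbit is $\Gamma$-invariant (this is where commensuration is used), hence by minimality it is dense in $\Lambda_X\Gamma$; one then chooses three such points and passes to a finite-index subgroup of $N$ fixing all three. The geometric mean-value property pins the metric derivative to $1$ at those three points, the estimate from the proof of Lemma~\ref{L: finite in H} propagates this to a uniform two-sided bound on all of $\partial X$, and Lemma~\ref{L: finite} gives finiteness. Your route instead packages commensuration into the identity $\Lambda(N)=\Lambda(\gamma N\gamma^{-1})=\gamma\Lambda(N)$ and contradicts minimality directly. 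Note, incidentally, that the maximal-atom set you produce at the start of your atomic case is never actually used afterwards; your $\Lambda(N)$ argument would run identically without the case split.

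The trade-off: the paper's approach is entirely self-contained, using only the derivative lemmas already established in Section~3. Your approach is dynamically cleaner but leans on two assertions --- that $\Lambda(N)\ne\emptyset$ whenever $N$ is infinite, and that amenability with a proper action forces $|\Lambda(N)|\le 2$ --- which are standard for proper $X$ but need justification here, since the paper explicitly works with non-proper good hyperbolic spaces (an unbounded orbit need not subconverge to the boundary, and one must also rule out focal amenable actions). If you keep your route you should either cite a reference handling the non-proper case, or observe that once a finite-index subgroup of $N$ fixes three boundary points --- which follows from the paper's $\Gamma$-invariance-of-finite-orbits trick, not merely from the maximal atoms, which may number fewer than three --- the derivative argument finishes the job with no appeal to $\Lambda(N)$ at all.
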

\begin{proof}
Let $N$ be an amenable commensurated subgroup of $\Gamma$. Then there exists an $N$-invariant probability measure $v$ on the compact space $\Lambda_X\Gamma$.

 \emph{Case 1: } The support of the measure $v$ has points with a finite $N$-orbit.  Note that the union of all finite $N$-orbit is $\Gamma$-invariant. Up to take a subgroup of finite index, $N$ have at least 3 fixed points. From the geometric mean-value property and similar construction as in the proof of Lemma~\ref{L: finite}, the metric derivative of $N$ is bounded from below and above.  Hence $N$ is finite by Lemma~\ref{L: finite}.

\emph{Case 2:} The support of $v$ has no finite orbit point. Then $v$ has no atoms and by Lemma~\ref{L: Barycenter}, $B_v(1)$ is bounded. Since $N$ preserve the bounded subset $B_v(1)$, $N$ must be finite. 

This establishes the (CAF) condition of $\Gamma$.
\end{proof}
In view of the following result, the amenable radical of $H$ is compact.
\begin{theorem}[\cite{bader2020lattice}, Theorem 5.2]\label{T: semisimple}
 The amenable radical of a lattice envelope of a group with property (CAF) is compact.   
\end{theorem}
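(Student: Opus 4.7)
The plan is to reprove the cited statement by bridging between amenable normal subgroups of the envelope $H$ and amenable commensurated subgroups of $\Gamma$, so that the (CAF) hypothesis becomes applicable. Set $N\defq\mathrm{Rad_{am}}(H)$, the closed normal amenable subgroup of $H$ whose compactness is to be established, and let $\Gamma$ be a lattice in $H$ enjoying (CAF).

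The first step is immediate. Since $N\trianglelefteq H$, the intersection $\Gamma\cap N$ is normal in $\Gamma$ and is, in particular, commensurated; being a subgroup of an amenable group it is itself amenable. Applying (CAF) yields that $\Gamma\cap N$ is finite.

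The second step upgrades ``finite intersection with $\Gamma$'' to ``compactness of $N$''. For the identity component $N^\circ$, I would invoke the Gleason--Yamabe theorem to express $N^\circ$ as an extension of a connected Lie group $L$ by a compact kernel; the amenability of $N$ forces $L$ to be a compact-by-solvable Lie group, and a classical theorem of Mostow guarantees that any discrete subgroup of finite covolume in a connected solvable Lie group is itself cocompact. Combined with the lattice property of $\Gamma$ in $H$, Auslander-type arguments identify a finite-index subgroup of $\Gamma\cap N^\circ$ with a lattice in $L$ modulo its compact radical; the finiteness of $\Gamma\cap N$ then forces the non-compact part of $L$ to be trivial, so $N^\circ$ is compact. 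For the totally disconnected quotient $N/N^\circ$, a Schlichting-completion argument would manufacture, from any non-compact amenable open subgroup, an infinite commensurated amenable subgroup inside a finite-index subgroup of $\Gamma/(\Gamma\cap N^\circ)$, contradicting (CAF) once one checks that this property is inherited through finite kernels and finite-index subgroups.

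The principal obstacle is exactly this ``lattice meets a normal subgroup in a lattice'' principle in the locally compact rather than Lie setting: without connectedness one cannot appeal directly to the classical Auslander--Mostow arguments, and one must handle the connected and totally disconnected layers separately while carefully verifying that (CAF) survives the passage to finite-index subgroups and to quotients by finite normal subgroups. The technical packaging of all of this is precisely the content of \cite{bader2020lattice}*{Theorem 5.2}, which we therefore invoke as a black box.
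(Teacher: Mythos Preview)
The paper does not prove this theorem at all: it is stated as a citation of \cite{bader2020lattice}*{Theorem 5.2} and used as a black box, accompanied only by the remark that the proof in \cite{bader2020lattice} does not require the lattice envelope to be compactly generated. Your proposal ultimately does the same thing---after an informal sketch you explicitly invoke \cite{bader2020lattice}*{Theorem 5.2} as a black box---so your approach agrees with the paper's.

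One comment on the sketch itself: the passage from ``$\Gamma\cap N$ is finite'' to ``$N$ is compact'' is where all the work lies, and your outline of it is not quite right. In general a closed normal subgroup of a locally compact group need not inherit a lattice from a lattice in the ambient group, so the Auslander/Mostow step you describe (identifying $\Gamma\cap N^\circ$ with a lattice in $L$ modulo a compact) does not go through as stated; the actual argument in \cite{bader2020lattice} is organized differently. Since you end by citing the reference anyway, this does not affect the correctness of what you hand in, but you should be aware that the intermediate sketch is heuristic rather than a valid reduction.
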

\begin{remark}    
Even though the definition lattice envelope in \cite{bader2020lattice} is assumed compact generated, the proof of Theorem~\ref{T: semisimple} is independent of this condition.
\end{remark}
Indeed, the amenable radical of $H$ is trivial.
\begin{lemma}
   $H$ has no compact radical. 
\end{lemma}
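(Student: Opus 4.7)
The plan is to show that the amenable radical $K=\mathrm{Rad_{am}}(H)$, already known to be compact by Theorem~\ref{T: semisimple} and the preceding discussion, is in fact trivial. Since $K\subset H\subset \Homeo(\Lambda_X\Gamma)$ acts faithfully on $\Lambda_X\Gamma$, it suffices to prove that every $k\in K$ fixes every point of $\Lambda_X\Gamma$. To this end I would combine the north-south dynamics of loxodromic elements of $\Gamma$ with the compactness of $K$.

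I would fix a loxodromic $\gamma\in\Gamma$ with attracting fixed point $\xi^+$ and repelling fixed point $\xi^-$; such $\gamma$ exist since $\Gamma$ is non-elementary. For any $k\in K$, normality of $K$ in $H$ gives $k_n:=\gamma^n k\gamma^{-n}\in K$ for every $n$. Because $\Lambda_X\Gamma$ is compact, the topology that $H$ inherits as a closed subgroup of $\operatorname{Mob}\subset \Homeo(\Lambda_X\Gamma)$ coincides with uniform convergence. Compactness of $K$ then supplies a subsequence of $\{k_n\}$ converging uniformly to some $k_\infty\in K$, and in particular $k_\infty$ must be a homeomorphism of $\Lambda_X\Gamma$.

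The heart of the argument is to compute the pointwise limit of $k_n$ under the hypothesis $k\xi^-\neq \xi^-$. For any $\zeta\neq \xi^+$ the dynamics give $\gamma^{-n}\zeta\to \xi^-$, hence $k(\gamma^{-n}\zeta)\to k\xi^-\neq \xi^-$; applying $\gamma^n$ then yields $k_n(\zeta)\to \xi^+$. A short case analysis at the two exceptional points $\zeta=\xi^\pm$, with the degenerate subcases $k\xi^+=\xi^-$ or $k\xi^-=\xi^+$ handled separately, shows that the pointwise limit of $k_n$ takes all of $\Lambda_X\Gamma$ into the two-point set $\{\xi^+,\xi^-\}$. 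Since $\Lambda_X\Gamma$ is infinite by non-elementarity, this map cannot be a homeomorphism, contradicting uniform convergence to $k_\infty\in \Homeo(\Lambda_X\Gamma)$. Consequently $k\xi^-(\gamma)=\xi^-(\gamma)$ for every loxodromic $\gamma\in\Gamma$ and every $k\in K$.

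Finally, the set of repelling fixed points of loxodromic elements of $\Gamma$ is a nonempty $\Gamma$-invariant subset of $\Lambda_X\Gamma$, hence dense by minimality of the $\Gamma$-action on its limit set. Thus each $k\in K$ fixes a dense set in $\Lambda_X\Gamma$ and, by continuity, acts as the identity; faithfulness of the $H$-action then forces $k=e$, so $K=\{e\}$. The main subtlety I expect is the bookkeeping of the degenerate cases for the pair $(k\xi^+,k\xi^-)$ in the pointwise-convergence computation; once those are organised, the contradiction with $k_\infty$ being a homeomorphism is immediate, and the final step is standard.
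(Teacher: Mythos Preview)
Your argument is correct. Both proofs reach the same intermediate goal---every element of the compact normal subgroup $K$ fixes each repelling point $\gamma^-$ of a loxodromic $\gamma\in\Gamma$---and then finish by density of such points in $\Lambda_X\Gamma$; but the routes to that intermediate goal are genuinely different.

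The paper exploits the M\"obius structure directly. Compactness of $K$ yields a uniform bound $|\ln f'|<\kappa$ on $\Lambda_X\Gamma$ for all $f\in K$. Applying this to $\gamma^n f\gamma^{-n}\in K$ at the point $\gamma^-$ and expanding with the chain rule gives $|\ln(\gamma^n)'(f(\gamma^-))-n\ell_\gamma|<2\kappa$, so for a single suitably large $n$ one has $(\gamma^n)'(f(\gamma^-))>1$ for every $f\in K$; since $(\gamma^n)'(\gamma^+)<1$, this keeps the orbit $K\gamma^-$ uniformly away from $\gamma^+$. Normality then gives $\gamma(K\gamma^-)=K\gamma^-$, and a closed $\gamma$-invariant set missing $\gamma^+$ must be $\{\gamma^-\}$.

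Your approach bypasses the derivative calculus entirely and is the classical convergence-group argument: assume $k\xi^-\neq\xi^-$, compute that the conjugates $\gamma^n k\gamma^{-n}$ collapse $\Lambda_X\Gamma$ pointwise into $\{\xi^+,\xi^-\}$, and contradict the fact that any subsequential limit in the compact set $K\subset\Homeo(\Lambda_X\Gamma)$ must be a homeomorphism. What you gain is generality---your proof would work verbatim for any compact normal subgroup of a group containing a loxodromic element acting on a perfect compactum, with no M\"obius or metric-derivative hypothesis. What the paper's argument buys is a cleaner quantitative statement (an explicit separation of $K\gamma^-$ from $\gamma^+$) tied to the ambient geometry, and it avoids the small case analysis at $\xi^\pm$ that you flag.
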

\begin{proof}
    Assume $K$ is a compact normal subgroup of $H$. Then there exists a constant $\kappa$ such that for all $f\in K$ and $x\in \Lambda_X\Gamma$, $|\ln f'(x)|<\kappa$. 

    By considering a hyperbolic element $\gamma\in \Gamma$, we have that for all $f\in K$:
    $$|\ln (\gamma^{n}f\gamma^{-n})'(\gamma^-)|<\kappa.$$
    Since
    $$\ln (\gamma^{n}f\gamma^{-n})'(\gamma^-)=\ln (\gamma^{n})'(f(\gamma^-))+\ln f'(\gamma^-)+\ln (\gamma^{-n})'(\gamma^-),$$ 
    it indicates that
    $|\ln (\gamma^n)'(f(\gamma^-)-n\ell_\gamma)|<2\kappa$ for all $n$. Taking a fixed $n>\frac{2\kappa}\rfloor$, we have $d_\epsilon(\gamma^+,K\gamma^-)>0$.

    On the other hand, $K\gamma^-=\gamma K\gamma^{-1}\gamma^-=\gamma K\gamma^-$. Hence $K\gamma^-=\gamma^-$. Since this holds for all hyperbolic elements, $K=Id$. Therefore, $H$ has no compact radical.
\end{proof}

According to Theorem~\ref{T: Stucture}, $H$ is virtually a product. Next Lemma states that one of the two factors is trivial.
\begin{lemma}\label{L: simple}
    $H$ is not virtually a product of two nontrivial groups. 
\end{lemma}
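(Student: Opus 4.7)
My plan is to derive a contradiction from a hypothetical decomposition $H_0 = H_1 \times H_2$ in some open finite-index $H_0 \trianglelefteq H$ by constructing a $\bbZ^2$ subgroup of $H$ that violates the properness of the abstract geodesic-flow action. The first step is to inherit the trivial-amenable-radical property: $\mathrm{Rad}_{\mathrm{am}}(H_1)\times\mathrm{Rad}_{\mathrm{am}}(H_2) = \mathrm{Rad}_{\mathrm{am}}(H_0)$ is characteristic in $H_0$, hence normal in $H$, hence contained in $\mathrm{Rad}_{\mathrm{am}}(H)=\{e\}$ by the preceding lemma; so each $H_i$ is nontrivial, noncompact, and non-amenable. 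Since $H$ preserves the cross-ratio on $\Lambda_X\Gamma$ and acts properly on $(\Lambda_X\Gamma)^{(2)}\times\bbR$, it acts on $\Lambda_X\Gamma$ as a convergence group. The trichotomy for subgroups of a convergence group then forces each non-amenable $H_i$ to be non-elementary, so each $H_i$ contains a loxodromic element $h_i$ with attracting/repelling fixed pair $\{p_i^+, p_i^-\}$ and nonzero translation length $\ell_i$ along its axis.

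Next, I would exploit the commutation $h_1 h_2 = h_2 h_1$. Since $h_2\cdot\mathrm{Fix}(h_1)=\mathrm{Fix}(h_2 h_1 h_2^{-1})=\mathrm{Fix}(h_1)$, $h_2$ permutes $\{p_1^+,p_1^-\}$; after replacing $h_2$ by $h_2^2$ if necessary (same fixed pair, still commuting, still loxodromic), I may assume $h_2$ fixes $\{p_1^+,p_1^-\}$ pointwise, and symmetrically for $h_1$. A loxodromic element of a convergence group has exactly two fixed points, so $\{p_1^+,p_1^-\}\subseteq \{p_2^+,p_2^-\}$, and equality follows by symmetry; call the common pair $\{p,q\}$. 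Because $H_1\cap H_2=\{e\}$ and each $h_i$ has infinite order, $\langle h_1,h_2\rangle\cong\bbZ^2$, and because the north--south dynamics make each $\langle h_i\rangle$ discrete in $H_i$, this $\bbZ^2$ is discrete in $H_0$, hence in $H$.

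Finally, on the $\bbZ^2$-invariant line $\{(p,q)\}\times\bbR\subset(\Lambda_X\Gamma)^{(2)}\times\bbR$, the action is given by the translation homomorphism $\ell:(a,b)\mapsto a\ell_1+b\ell_2$. If $\ell_1/\ell_2\notin\bbQ$, then $\ell(\bbZ^2)$ is dense in $\bbR$, so infinitely many elements of $\bbZ^2$ send the compact set $\{(p,q)\}\times[0,1]$ to itself; together with the discreteness of $\bbZ^2$ in $H$, this contradicts the properness of the $H$-action. If instead $\ell_1/\ell_2\in\bbQ$, a nontrivial kernel element $\gamma_0=h_1^a h_2^b$ fixes $(p,q,0)$ and so lies in the compact stabilizer $\Stab_H((p,q,0))$, while its cyclic subgroup $\langle\gamma_0\rangle$ is infinite and discrete in $H$ --- impossible. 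The hard step will be cleanly verifying the convergence-group ingredients in the first paragraph, since $H$ is defined here by a cross-ratio and measure-preservation condition rather than in a classical M\"{o}bius framework; granted that closed non-amenable subgroups of $H$ contain loxodromic elements with exactly two fixed points and matching $\bbR$-translation length on the flow, the rest is a routine combination of commutation, discreteness, and properness.
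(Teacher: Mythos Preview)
Your endgame---finding a discrete $\bbZ^2$ with common fixed pair $\{p,q\}$ and deriving a contradiction from properness of the flow action on $\{(p,q)\}\times\bbR$---is sound once you have loxodromics $h_i\in H_i$. The real gap is exactly the step you flag: ``non-amenable $\Rightarrow$ non-elementary'' is \emph{not} a general convergence-group fact. Elementary subgroups include parabolic ones (those fixing a unique boundary point), and in an abstract locally compact convergence group a point-stabilizer need not be amenable; nothing in the paper establishes that $\Stab_H(\xi)$ is amenable, nor does it set up the convergence-group formalism for $H$. So as written, the existence of a loxodromic in each $H_i$ is unjustified.

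The paper sidesteps this entirely. It starts from a hyperbolic $\gamma\in\psi(d\Gamma)\cap H_0$ (these exist since $\Gamma$ is non-elementary) and writes $\gamma=p_1(\gamma)p_2(\gamma)$. Because metric derivatives multiply, one projection---say $p_1(\gamma)$---has a contracting fixed point at $\gamma^+$, and its fixed set is exactly $\{\gamma^+,\gamma^-\}$ by $\gamma$-invariance. Now the \emph{entire} factor $H_2$ (not just one element) commutes with $p_1(\gamma)$, so after passing to an index-$2$ subgroup $H_2$ fixes $\gamma^\pm$ pointwise. Since $H_2\trianglelefteq H_0$, the set $\mathrm{Fix}(H_2)$ is $H_0$-invariant and closed, hence by minimality equals $\Lambda_X\Gamma$; as $H\le\Homeo(\Lambda_X\Gamma)$, this forces $H_2$ to be trivial (or $\bbZ/2\bbZ$, excluded by the trivial amenable radical). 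This route needs no convergence-group classification and no properness/$\bbZ^2$ argument. Note that the very ingredient that would most naturally repair your gap---using normality of $H_i$ in $H_0$ together with minimality to rule out an invariant one- or two-point set---is precisely the paper's mechanism, and once you invoke it you can finish immediately without building the $\bbZ^2$.
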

\begin{proof}
    Assume there exists a finite index subgroup $H_0$ of $H$ such that $H_0=H_1\times H_2$. Let $p_i$ be the projection $H\to H_i$, $i=1$, $2$. Replacing $\psi(d\Gamma)$ by $\psi(d\Gamma)\cap H_0$, the same argument as in Lemma 4.6 implies that the amenable radical of $H_0$ is trivial.
    
    Let $\gamma\in \psi(d\Gamma)\cap H_0$ be a hyperbolic element. Since $p_i(\gamma)$ commute with $\gamma$, up to take square, $p_i(\gamma)$ fix $\gamma^\pm$ pointwisely. Since for the metric derivative, we have $\gamma'=p_1(\gamma)'p_2(\gamma)'$. One of the two elements say, $p_1(\gamma)$ has a contraction fixed point at $\gamma^+$. Then the fixed points of $p_1(\gamma)$ are exactly $\gamma^\pm$ since it is $\gamma$-invariant. Now $H_2$ commute with $p_1(\gamma)$. Hence up to an index two subgroup, $H_2$ fixed $\gamma^\pm$. Consider the set $\mathrm{Fix}(H_2)$. It is a $H_0$-invariant closed subset of $\Lambda_X\Gamma$. Thus it is $\Lambda_X\Gamma$. In other words, $H_2$ is either trivial or $\bbZ/2\bbZ$. 

    If $H_2=\bbZ/2\bbZ$, then $H_2$ is a compact radical of $H$. It is impossible. Therefore, $H_2$ is trivial.
\end{proof}
Finally, we can classify the possible structures of $H$:
\begin{theorem}\label{T: Structure 1}
    Let $\Gamma$ act metrically properly on a good hyperbolic space $(X,d)$. Assuming $\Gamma$ is a $2\frac{1}{2}$ lattice and its limit set $\Lambda \Gamma$ is compact, the image $\psi(d\Gamma)$ is a lattice of $H$ as constructed in section 3. The group $H$ is one of the following:
    \begin{enumerate}
        \item A discrete group.
        \item A non-discrete, totally disconnected locally compact group with trivial amenable radical.
        \item A rank-one simple real Lie group $G=\Isom(\bfH)$.
    \end{enumerate}
\end{theorem}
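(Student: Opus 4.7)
The plan is to apply the Burger--Monod structure theorem (Theorem~\ref{T: Stucture}) to $H$ and then use the results already established in this section to eliminate all but three possibilities. The structure theorem provides an open, normal, finite-index subgroup $H_0 \leq H$ containing $\mathrm{Rad}_{am}(H)$, together with a topological isomorphism
\[
H_0/\mathrm{Rad}_{am}(H) \;\cong\; S \times D,
\]
where $S$ is a connected, centerless, semisimple real Lie group without compact factors, and $D$ is a totally disconnected locally compact group with trivial amenable radical. The preceding lemma ($H$ has no compact radical), combined with the fact that $\mathrm{Rad}_{am}(H)$ is compact by Theorem~\ref{T: semisimple}, forces $\mathrm{Rad}_{am}(H) = 1$; hence $H_0 \cong S \times D$.

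Next, Lemma~\ref{L: simple} asserts that $H$ is not virtually a product of two nontrivial groups. Applied to the decomposition $H_0 = S \times D$, this forces $S = 1$ or $D = 1$. In the first case $H_0 = D$ is totally disconnected; since $H_0$ is open in $H$, the group $H$ itself is totally disconnected, and we are in case (1) (if $H$ is discrete) or case (2) (if not), the triviality of the amenable radical being automatic. In the second case $H_0 = S$ is a connected, centerless, semisimple Lie group without compact factors; applying Lemma~\ref{L: simple} once more to the decomposition of $S$ into its simple factors shows that $S$ must already be simple.

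The main obstacle is to conclude that this simple Lie group $S$ is rank one and to then identify $H$ with $\Isom(\bfH)$. My plan is to exploit that $S \subset H \subset \operatorname{Mob}(\Lambda_X\Gamma)$ acts by Möbius transformations on the compact metric space $(\Lambda_X\Gamma, d_\epsilon)$ and that, by the construction carried out in Section~\ref{lattice}, the induced action of $H$ (hence of $S$) on the abstract geodesic flow space $(\Lambda_X\Gamma)^{(2)} \times \bbR$ is proper. Since $\Gamma \subset S$ is a lattice and acts on $\Lambda_X\Gamma$ as a convergence group (inherited from its discrete isometric action on the hyperbolic space $X$), this convergence-group action extends by density and continuity to the whole of $S$. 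A connected simple Lie group admitting a faithful convergence (equivalently, Möbius) action on a compact metric space must be of rank one---its symmetric space $S/K$ is then Gromov hyperbolic, so $S$ is (the identity component of) the isometry group of a rank-one symmetric space $\bfH$. Finally, because $S$ has finite index in the locally compact group $H$, and the finite extensions of $S$ by outer automorphisms are absorbed into $\Isom(\bfH)$, we conclude $H = G = \Isom(\bfH)$, yielding case (3).
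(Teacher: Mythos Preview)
Your reduction via the Burger--Monod theorem, the compactness (hence triviality) of $\mathrm{Rad}_{am}(H)$, and Lemma~\ref{L: simple} is essentially the paper's argument and is fine. The divergence---and the gap---is in the rank-one step.

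You write that the convergence-group action of $\Gamma$ on $\Lambda_X\Gamma$ ``extends by density and continuity to the whole of $S$.'' But $\Gamma$ (more precisely $\psi(d\Gamma)\cap H_0$) is a \emph{lattice} in $S$, hence discrete, not dense; nothing extends by density here. One could instead try to show directly that $H$ (hence $S$) acts on $\Lambda_X\Gamma$ as a convergence group, e.g.\ by deducing properness on distinct triples from the properness of the abstract geodesic flow on $(\Lambda_X\Gamma)^{(2)}\times\bbR$ established in Section~\ref{lattice}; but that implication is not immediate and you have not supplied it. Even granting a convergence action, your assertion that ``a connected simple Lie group admitting a faithful convergence (equivalently, M\"obius) action on a compact metric space must be of rank one'' is neither standard nor cited, and ``convergence'' and ``M\"obius'' are not equivalent in general. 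This is the missing idea.

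The paper closes this gap differently and more directly: once $H_0=G$ is connected, center-free, simple, it observes that (a finite-index subgroup of) $\psi(d\Gamma)$ is a lattice in $G$ and acts non-elementarily and isometrically on the hyperbolic space $X$. Haettel's hyperbolic rigidity theorem \cite{haettel2020hyperbolic} says that every isometric action of a higher-rank lattice on a hyperbolic space is elementary; hence $G$ must have real rank one. This bypasses any analysis of the $H$-action on $\Lambda_X\Gamma$ and uses only the original $\Gamma$-action on $X$. Your handling of the passage from $H_0=G$ to $H=\Isom(\bfH)$ is comparable to the paper's (which argues via $\Aut(G)$ and the triviality of the amenable radical to kill the finite complement); either route is acceptable once rank one is established.
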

\begin{proof}
    In view of Theorem~\ref{T: Stucture}. If $H$ does not fall into cases (1) and (2), it is virtually a connected, center-free, semisimple real Lie group $G$ without compact factors. Since $\mathrm{Aut}(G)=G$, $H=G\times H'$ for some finite group $H'$. Since $H'$ is in the amenable radical, it must be trivial, leading to $H=G$. And $G$ is simple by Lemma~\ref{L: simple}.

    Haettel's Theorem \cite{haettel2020hyperbolic} states all isometric actions of lattices of higher rank Lie groups are elementary.  This forces $G$ to be of rank one. 
\end{proof}
\section{Proof of Theorem~\ref{T: arithmetic} and \ref{T: locally}}
The proofs of Theorem~\ref{T: arithmetic} and \ref{T: locally} rely on the following lemmas.

 For any $h\in\GCommen{\Gamma}{d}$, it induces a roughly isometry on $(\Gamma,d)$, and thus a boundary map $\partial h\in \Homeo(\Lambda_X\Gamma)$.
\begin{lemma}\label{L: comm}
   The kernel of the map $d: \GCommen{\Gamma}{d}\to \Homeo(\Lambda_X\Gamma)$ is trivial, and the image is contained in $H$. Furthermore, we have: 
   \[
   d(\GCommen{\Gamma}{d})\subset \Commen_H(d\Gamma),\]
   \[
   [d(\GCommen{\Gamma}{d}):d\Gamma]=\infty.
   \]
\end{lemma}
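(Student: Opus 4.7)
I would address the four assertions in the order: image in $H$, containment in $\Commen_H(d\Gamma)$, triviality of the kernel, and the infinite-index statement. Throughout, a class $h\in\GCommen{\Gamma}{d}$ is represented by an isomorphism $\phi:\Gamma'\to\Gamma''$ between finite-index subgroups satisfying the bounded-distortion condition from Definition~\ref{D:GCom}.

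\textbf{Image in $H$.} I would view $\phi$ as a rough isometry between the orbits $\Gamma'p$ and $\Gamma''p$ inside $X$; since $[\Gamma:\Gamma']<\infty$ these orbits share the limit set $\Lambda_X\Gamma$. The standard extension of rough isometries of good hyperbolic spaces to boundary homeomorphisms then produces $\partial h\in\Homeo(\Lambda_X\Gamma)$, and well-definedness on the equivalence class follows because two representatives of $h$ agree on a common finite-index subgroup, whose orbit already determines the boundary map. Otal's limit formula expresses $\mathbb{B}$ as a limit of signed sums of four distances, so bounded distortion of $\phi$ immediately gives $\partial h\in\operatorname{Mob}$. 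For membership in $H$, I would use that $v$ is unique up to scalar in the divergence-type setting and that the restriction to a finite-index subgroup is again a PS measure, so the PS data of $\Gamma'$ and $\Gamma''$ both coincide with $v$; consequently $\phi$ intertwines the $\delta$-conformal cocycles, and after passing to the product the resulting Mobius $\partial h$ preserves $d_\epsilon^{-2\delta/\epsilon}v\times v$.

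\textbf{Commensurator and infinite index.} Functoriality of the boundary extension gives $(\partial h)\circ\partial\gamma\circ(\partial h)^{-1}=\partial\phi(\gamma)$ for $\gamma\in\Gamma'$, hence $(\partial h)\,d\Gamma'\,(\partial h)^{-1}=d\Gamma''$. Since $d:\Gamma\to d\Gamma$ has finite kernel by Lemma~\ref{L: finite in H}, both $d\Gamma'$ and $d\Gamma''$ are finite-index in $d\Gamma$, placing $\partial h$ in $\Commen_H(d\Gamma)$. The inclusion $\Gamma\hookrightarrow\GCommen{\Gamma}{d}$ via inner automorphisms is a legitimate embedding (conjugation by $\gamma$ is a $2|\gamma|$-rough isometry of $(\Gamma,d)$), and the composition with $d$ recovers $d\Gamma$. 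Once the kernel is shown to be trivial, $d$ is injective on $\GCommen{\Gamma}{d}$ and the index is preserved, giving $[d(\GCommen{\Gamma}{d}):d\Gamma]=[\GCommen{\Gamma}{d}:\Gamma]=\infty$ by hypothesis.

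\textbf{Kernel triviality (the main obstacle).} Assume $\partial h=\id_{\Lambda_X\Gamma}$. Because $\phi(e)=e$, the orbit map fixes $p$; I would then run an argument entirely parallel to Lemmas~\ref{L: finite} and~\ref{L: finite in H}, with the vanishing of $\log(\partial h)'$ on $\Lambda_X\Gamma$ playing the role of the derivative bound $\tfrac{1}{2}\le\gamma_n'\le 2$, to produce a uniform bound $d(\phi(\gamma)p,\gamma p)\le C$. Metric properness then confines $\gamma^{-1}\phi(\gamma)$ to a finite set $E\subset\Gamma$. For hyperbolic $\gamma\in\Gamma'$, the element $\phi(\gamma)\in\Gamma$ lies in the virtually cyclic stabilizer of $\{\gamma^+,\gamma^-\}$ (same fixed-point pair by $\partial h=\id$, same translation length by rough isometry), which together with $\partial\phi(\gamma^\pm)=\gamma^\pm$ forces $\phi(\gamma)=\gamma$ up to a bounded torsion contribution. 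Intersecting the resulting finite-index conditions over a generating set of hyperbolic elements (abundant by non-elementarity) produces a finite-index subgroup on which $\phi$ coincides with the identity, so $[\phi]$ is trivial in $\GCommen{\Gamma}{d}$. The delicate point here is that the set-valued map $\gamma\mapsto\gamma^{-1}\phi(\gamma)$ is not a group homomorphism, so the passage from finite-valuedness to an honest finite-index kernel of $\phi\cdot\id^{-1}$ must rely on the dynamical rigidity of hyperbolic elements and the virtually-cyclic stabilizer structure, rather than a purely group-cohomological manipulation.
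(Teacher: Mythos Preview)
Your treatment of the first three assertions is essentially in line with the paper's (brief) argument. The substantive divergence is in the kernel-triviality step, and there your explicit dismissal of ``a purely group-cohomological manipulation'' is precisely where you part ways with the paper --- and where your proposal acquires a gap. The paper does \emph{not} use hyperbolic-element dynamics here. Instead, after obtaining the uniform bound $d(\phi(\gamma),\gamma)\le M$ (which the paper gets via the barycenter Lemma~\ref{L: Barycenter} rather than a Lemma~\ref{L: finite}/\ref{L: finite in H}-style derivative estimate), it writes $\phi(\gamma)=\gamma\,\tau(\gamma)$ and exploits the cocycle identity
\[
\tau(\gamma_1\gamma_2)=\gamma_2^{-1}\tau(\gamma_1)\gamma_2\,\tau(\gamma_2)
\]
to deduce $d(\gamma_2^{-1}\tau(\gamma_1)\gamma_2,e)\le 2M$ for all $\gamma_2$ in the finite-index domain. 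Thus every conjugate of $\tau(\gamma_1)$ is uniformly close to $e$, which forces $\tau(\gamma_1)$ into the finite \emph{normal} subgroup $N=\Ker(\Gamma\to d\Gamma)$. On the finite-index subgroup $\Gamma_2=\Ker(\Gamma\to\Aut(N))$ the cocycle identity collapses to a genuine homomorphism $\tau:\Gamma_2\to N$, whose kernel is the desired finite-index subgroup on which $\phi=\id$.

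Your alternative route --- restricting to hyperbolic $\gamma$, using the virtually-cyclic stabiliser of $\{\gamma^+,\gamma^-\}$ to get $\phi(\gamma)=\gamma\cdot(\text{torsion})$, and then ``intersecting the resulting finite-index conditions over a generating set'' --- does not close. Knowing $\phi(\gamma_i)=\gamma_i t_i$ for generators $\gamma_i$ with $t_i$ torsion does not by itself produce a finite-index subgroup on which $\phi$ is the identity: the $t_i$ need not vanish, need not lie in a common normal subgroup a priori, and there is no evident ``finite-index condition'' attached to a single hyperbolic element to intersect. The missing idea is exactly the cohomological one you set aside: once $\tau$ is known to take values in a fixed finite \emph{normal} subgroup, passing to the centraliser of that subgroup turns $\tau$ into a homomorphism and the problem becomes trivial. (Incidentally, your own equivariance observation $\partial\phi(\gamma)=\partial\gamma$ already shows $\tau(\gamma)\in\Ker(\Gamma\to H)$ for every $\gamma$, not just hyperbolic ones, so the detour through axis-stabilisers is unnecessary.)
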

\begin{proof}
    The $d(\GCommen{\Gamma}{d})$ preserves the Patternson-Sulivan measure class. It also preserves the cross-ratio. Therefore, $d(\GCommen{\Gamma}{d})\subset H$.  

    Any $s\in \GCommen{\Gamma}{d}$ induces an element in $\ACommen(d\Gamma)$, by the same proof as in \cite{2023}*{Proposition 2.3}, which is compatible with the action on the boundary $\Lambda_X\Gamma$. Hence it is enough to show the kernel of $d$ is finite.

    Let $s\in \GCommen{\Gamma}{d}$ such that $d(s)=Id$. Consider a point-mass free measure $\nu$ on $\Lambda_X\Gamma$. It is clear that $\nu$ is invariant under $s$. Since $s$ is a $(1,C)$ quasi-isometry of $(\Gamma, d)$,  we have $s(B_\nu(1))\subset B_\nu(1+2C)$. The same holds for $\gamma_*(\nu)$. Using the fact that $B_{\gamma_*(\nu)}(A)=\gamma B_\nu(A)$, we can find a constant $M$ such that
    $d(s(\gamma),\gamma)<M$ for all $\gamma\in \Gamma$ by Lemma~\ref{L: Barycenter}.
    
    Let $\Gamma_1$ be the domain of $s$. Then, we have for all $\gamma_1\in\Gamma$, $s(\gamma_1)=\gamma_1\tau(\gamma_1)$ for some map $\tau$ with $d(\tau(\gamma_1), e)\leq M.$
    The equation $s(\gamma_1\gamma_2)=s(\gamma_1)s(\gamma_2)$ can be rewritten as 
    \[
    \tau(\gamma_1\gamma_2)=\gamma_2^{-1}\tau(\gamma_1)\gamma_2\tau(\gamma_2).
    \]
    Thus, we obtain $d(\tau(\gamma_1)\gamma_2,\gamma_2)=d(\gamma_2^{-1}\tau(\gamma_1)\gamma_2,e)\leq 2M$
    for all $\gamma_1$, $\gamma_2\in \Gamma_1$. Since $\Gamma_1$ is finite index subgroup, $\tau(\gamma_1)\in N=\Ker(\Gamma\to d\Gamma)$. 
    
    Let $\Gamma_2=\ker(\Gamma\to \mathrm{Aut}(N))$. Then $
\tau$ (restricted to $\Gamma_2$) is a group homomorphism to the finite group $N$. Taking $\Gamma_3=\ker \tau$, we have $s=Id_{\Gamma_3}=Id_{\Gamma}$ as an equivalence class. 

This completes the proof.
\end{proof}
Lemma~\ref{L: comm} rules out the case (1) in Theorem~\ref{T: Structure 1}. And for Theorem~\ref{T: locally}, case (1) in Theorem~\ref{T: Structure 1} is ruled out by conditions. Now in case (2), the Busemann cocycle of $(\Gamma,d)$ extends to the rank-1 Lie group $H$. Rank-1 condition implies that the Busemann cocycle of $H$ up to strict equivalence is 1 dimensional. For more details, see \cite{hao2024marked}*{Section 7}. Then Theorem~\ref{T: arithmetic} and ~\ref{T: locally} follow from the marked length spectrum rigidity in coarse-geometrical setting, see \cite{furman2002coarse} for cocompact case, and \cite{hao2023marked} for non-compact case.

\section{Finite volume complete manifold}
We will prove Theorem~\ref{non-uniform arithmetic} in a stronger form, namely, Theorem~\ref{non-niform arithmetic1}. Before this, we will introduce some notation.

Let $\Gamma$ be a group with bounded torsion, i.e. there exists $N>0$, such that for all torsion elements in $\alpha\Gamma$, $\alpha^N=1$. Assume $\Gamma$ is hyperbolic relative to a finite collection of finitely generated infinite nilpotent groups $\{P_i\}_{i=1}^n$. In \cite{groves2008dehn}, Groves and Manning constructed a hyperbolic space with a proper $\Gamma$-action and $\{P_i\}_{i=1}^n$ are precisely the cusp groups. We call $(\Gamma,\{P_i\}_{i=1}^n)$ are \textit{geometrically good} if $\Gamma$ admit such a proper action on a proper good hyperbolic space $(X,d)$, $\Gamma$ is a $2\frac{1}{2}$-lattice for this action and $P_i$ acts cocompactly on $\Lambda_X\Gamma\setminus\{\xi_i\}$ where $\xi_i$ is the cusp point corresponds to $P_i$.

A typical example will be the fundamental group of the manifold in Theorem~\ref{non-uniform arithmetic}.

Let $(\Gamma, \{P_i\}_{i=1}^n)$ be geometrically good and $(X,d)$ be the correspondence $\Gamma$-space. By Theorem~\ref{T: Structure 1}, $\Gamma$ is a lattice of a locally compact group $H$. By the following lemma, if $H$ is a totally disconnected group, $\Gamma$ is a cocompact lattice of $H$.

\begin{lemma}[\cite{bader2019lattices},Corollary 4.11]
    Let $\Gamma$ be an unimodular, totally disconnected, locally compact group. Let $\Gamma<H$ be a closed subgroup of finite covolume. Let $m_\Gamma$ denote the Haar measure of $\Gamma$. If there is an upper bound on the Haar measures of compact open subgroups in $\Gamma$, that is:
    \[
\sup\setdef{m_\Gamma(U)}{U\ \text{is a compact open subgroup in H}}< \infty,
\]
then $H/\Gamma$ is compact. 
\end{lemma}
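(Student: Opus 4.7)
The plan is to argue by contradiction: assume $H/\Gamma$ is non-compact and derive a contradiction with the bounded-measure hypothesis. (I read the statement with the evident correction that $H$ is the ambient tdlc unimodular group, $\Gamma<H$ is a closed subgroup of finite covolume, and the supremum is over compact open subgroups of $H$.)

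First, fix any compact open subgroup $U<H$ --- which exists because $H$ is totally disconnected --- and consider the double-coset decomposition $H=\bigsqcup_{i\in I}Ug_i\Gamma$. Each piece $Ug_i\Gamma/\Gamma\subset H/\Gamma$ is compact (continuous image of the compact set $Ug_i$) and open (since $U$ is open), so non-compactness of $H/\Gamma$ forces $I$ to be infinite. Setting $V_i:=U\cap g_i\Gamma g_i^{-1}$, a finite subgroup of $U$ by discreteness of the conjugate $g_i\Gamma g_i^{-1}$, unimodularity of $H$ combined with the standard covolume computation yields
\[
m_H(H/\Gamma)=m_H(U)\cdot\sum_{i\in I}\frac{1}{|V_i|};
\]
finiteness of the left side forces $|V_i|\to\infty$ along any enumeration of $I$.

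Next, I would use the bounded-measure hypothesis to produce a maximal compact open subgroup of $H$ by Zorn's lemma. Given any chain $U_1\subset U_2\subset\cdots$ of compact open subgroups, the union $U_\infty:=\bigcup_n U_n$ is open in $H$ and satisfies $m_H(U_\infty)=\lim_n m_H(U_n)\le \sup_V m_H(V)<\infty$. In a tdlc group an open subgroup of finite Haar measure is automatically compact (any compact open subgroup $U'\le U_\infty$ has finite index in $U_\infty$, since its cosets all carry the same positive measure $m_H(U')$). Hence $U_\infty$ is a compact open upper bound for the chain, and Zorn's lemma provides a maximal compact open subgroup $U^*<H$. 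Applying the previous paragraph with $U=U^*$ produces finite subgroups $V_i\le U^*$ with $|V_i|\to\infty$.

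The main obstacle --- and the technical heart of the argument --- is to translate this unbounded growth of finite subgroups inside $U^*$ into a strict enlargement of $U^*$, contradicting maximality. The strategy is to pass to a Chabauty limit: since the $V_i$ are closed subgroups of the fixed compact set $U^*$, compactness of the Chabauty topology on closed subgroups of $H$ yields a convergent subsequence $V_i\to V_\infty$, with $V_\infty$ a closed subgroup of $U^*$. The condition $|V_i|\to\infty$ forces $V_\infty$ to be infinite (by a pigeonhole on accumulation points in the compact set $U^*$), hence non-discrete as a closed subgroup of the compact group $U^*$. Combined with fine information on the way the conjugates $g_iU^*g_i^{-1}$ intersect $U^*$, this non-discreteness is then leveraged to produce a strictly larger compact open subgroup of $H$ containing $U^*$ properly, contradicting maximality. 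Making this last step clean is delicate; the full argument is carried out in \cite{bader2019lattices}.
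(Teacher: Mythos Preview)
The paper does not give its own proof of this lemma; it is simply quoted from \cite{bader2019lattices} and used as a black box. So there is nothing in the paper to compare your sketch against.

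That said, your ``evident correction'' of the garbled statement is not quite the right one, and this is what makes your argument so elaborate. The supremum should be taken over compact open subgroups $U$ of $\Gamma$ (measured with $m_\Gamma$), not of $H$: note that the paper applies the lemma precisely via the bounded-torsion hypothesis on the discrete group $\Gamma$ introduced at the start of Section~6, and for discrete $\Gamma$ the compact open subgroups are exactly the finite ones, with $m_\Gamma$ the counting measure. With this reading the proof is immediate. Fix any compact open $U<H$ and decompose $H=\bigsqcup_i Ug_i\Gamma$; the sets $W_i:=g_i^{-1}Ug_i\cap\Gamma$ are compact open in $\Gamma$, and under the compatible normalisation one has
\[
m_{H/\Gamma}(H/\Gamma)=\sum_i m_{H/\Gamma}(Ug_i\Gamma/\Gamma)=m_H(U)\sum_i\frac{1}{m_\Gamma(W_i)}.
\]
If $H/\Gamma$ is non-compact the index set is infinite, forcing $m_\Gamma(W_i)\to\infty$, which contradicts the hypothesis directly. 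No Zorn's lemma, no maximal compact open subgroup, no Chabauty limit.

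Your route, with the supremum over compact open subgroups of $H$, forces the maximal-$U^*$ strategy, and even then the decisive step---producing a strictly larger compact open subgroup from the Chabauty limit $V_\infty$---is left as a deferral to the reference rather than an argument. Note also that your assertion that $g_i\Gamma g_i^{-1}$ is discrete is not justified by the hypotheses you adopted (you only took $\Gamma$ closed of finite covolume); in the general case the $V_i$ are compact open in $g_i\Gamma g_i^{-1}$, not finite.
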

Therefore, we have the following:

\begin{lemma}\label{h'}
     If there exists an element $h\in H$ such that there exists a cusp point $\xi\in \partial X$ with $h\xi=\xi$ and $h'(\xi)>1$, then $H$ is a Lie group. 
\end{lemma}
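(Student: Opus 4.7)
The plan is to apply the trichotomy of Theorem~\ref{T: Structure 1}: either (a) $H$ is discrete, (b) $H$ is a totally disconnected non-discrete locally compact group with trivial amenable radical, or (c) $H$ is a rank-one simple real Lie group $\Isom(\bfH)$. The plan is to exclude (a) and (b) using the element $h$ together with the cusp structure furnished by the geometrically good hypothesis, leaving case (c).

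To exclude case (a), if $H$ is discrete then $\Gamma$ is a finite-index subgroup of $H$, so some positive power $h^N$ lies in $\Gamma$. Since $h^N\xi=\xi$, this power belongs to the cusp stabilizer $\Stab_\Gamma(\xi)=P_\xi$. In the geometrically good setup every element of $P_\xi$ is parabolic, hence has vanishing Busemann displacement at $\xi$ and therefore metric derivative equal to $1$ there. But the chain rule gives $(h^N)'(\xi)=h'(\xi)^N>1$, a contradiction.

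To exclude case (b), suppose $H$ is totally disconnected and non-discrete. The key object is the continuous derivative character
\[
\chi\colon H_\xi\longrightarrow\bbR^+,\qquad \chi(g)=g'(\xi),
\]
on the $H$-stabilizer of $\xi$. Since $H_\xi$ is totally disconnected, every neighborhood of the identity contains a compact open subgroup $U$, whose image $\chi(U)$ is a compact subgroup of $\bbR^+$, hence trivial; thus $\chi$ is locally constant and $\chi(H_\xi)$ is a discrete cyclic subgroup of $\bbR^+$ containing $\chi(h)>1$. I then invoke the preceding Bader--Caprace--Gelander lemma, which in the geometrically good setup forces $\Gamma$ to be cocompact in $H$ when $H$ is totally disconnected, and reach a contradiction by showing that $\{[h^n]\}_{n\geq 0}$ escapes every compact subset of $\Gamma\backslash H$. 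Concretely, a would-be convergent subsequence $\gamma_k h^{n_k}\to g_\infty$ with $n_k\to\infty$ satisfies $\gamma_k\xi\to g_\infty\xi$; if $\gamma_k\xi$ is eventually constant one may, after composing with a fixed $\Gamma$-element, assume $\gamma_k\in P_\xi$, and then $\chi(\gamma_k h^{n_k})=\chi(h)^{n_k}\to\infty$ contradicts convergence in $H_\xi$ by continuity of $\chi$.

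The main obstacle is the remaining subcase, where $\gamma_k\xi$ takes infinitely many distinct values converging to $g_\infty\xi$. Tracking the derivative of $\gamma_k h^{n_k}$ at $\xi$ yields $\gamma_k'(\xi)\cdot\chi(h)^{n_k}\to g_\infty'(\xi)<\infty$, forcing $\gamma_k'(\xi)\to 0$; combining this quantitative decay with the discreteness of $\Gamma$ in $H$ established in Section~\ref{lattice} and the cocompact action of $P_\xi$ on $\Lambda_X\Gamma\setminus\{\xi\}$ should rule out the nonstationary case, but working out the details carefully is the technical heart of the argument.
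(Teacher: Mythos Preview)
The proposal's strategy for case (a) is fine, but the treatment of case (b) has a genuine gap that you yourself flag. The nonstationary subcase---where $\gamma_k\xi$ takes infinitely many distinct values---is not settled, and the ingredients you list (the decay $\gamma_k'(\xi)\to 0$, discreteness of $\Gamma$ in $H$, and cocompactness of $P_\xi$ on $\Lambda_X\Gamma\setminus\{\xi\}$) do not obviously combine into a contradiction. Cusp points are typically dense in the limit set, so there is no discreteness of the orbit $\Gamma\xi$ to exploit; and the boundedness of $(\gamma_k h^{n_k})'$ at both fixed points of $h$ is perfectly consistent with convergence in $H$, so tracking derivatives alone does not help. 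Crucially, your argument never uses that $P_\xi$ is \emph{nilpotent}, and this turns out to be the essential input.

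The paper takes a completely different route in case (b), one that exploits the nilpotent cusp structure directly rather than chasing cosets in $\Gamma\backslash H$. One passes to the punctured limit set $\Lambda_X\Gamma\setminus\{\xi\}$ equipped with the Hamenst\"adt metric $d'(\zeta,\zeta')=d_\epsilon(\zeta,\zeta')/\bigl(d_\epsilon(\zeta,\xi)\,d_\epsilon(\zeta',\xi)\bigr)$. On this space the kernel $H_\xi^c=\ker\chi$ acts by isometries, while $h$ acts as a genuine metric contraction. The $\Gamma_\xi^c$--orbit of the second fixed point $\eta$ of $h$ is a net quasi-isometric to the nilpotent group $\Gamma_\xi^c$; the contracting iterates $h^n(\Gamma_\xi^c\cdot\eta)$ realise rescalings of this net inside the fixed ambient space, and their pointed Gromov--Hausdorff limit---the asymptotic cone of $\Gamma_\xi^c$, a connected Carnot group by Gromov--Pansu---Lipschitz embeds into the orbit $\Xi=H_\xi^c\cdot\eta$. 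But a coset space of a totally disconnected locally compact group by a closed subgroup is itself totally disconnected, so $\Xi$ cannot contain a nontrivial connected set. That is the contradiction. Your escape-of-compact-sets approach, even if it can be completed, would need some substitute for this geometric input; as written, the heart of the argument is missing.
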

\begin{proof}
   Assume $H$ is not a lie group. Then by Theorem~\ref{T: Structure 1}, either $H$ is a finite extension of $\Gamma$, in which case, the $h$ in the lemma does not exist, or $H$ is a totally disconnected, non-discrete locally compact group. 
   
   Denote the fixed points of $h$ on the boundary by $\xi$ and $\eta$. Let \[H^c_\xi=\setdef{s\in H}{s(\xi)=\xi, s'(\xi)=1}, \quad \Gamma^c_\xi=\Gamma\cap H^c_\xi.\]
   Then since $\xi$ is a cusp point, $\Gamma^c_\xi$ is a discrete infinite virtually nilpotent subgroup of $H^c_\xi$. Notice that the orbit $H^c_\xi(\eta)$ is $h$-invariant. 
   Now let $\Xi$ be the orbit $H^c_\xi\cdot\eta$ with the Hamenst\"{a}dt metric: \cite{das2017geometry}*{Proposition 3.6.19} $d'(\zeta,\zeta')=\frac{d_\epsilon(\zeta,\zeta')}{d_\epsilon(\zeta,\xi)d_\epsilon(\zeta',\xi)}$. Then  $H^c_\xi$ acts isometrically on $\Xi$.  And $h$ acts on $\Xi$ via contraction with factor $\frac{1}{h'(\xi)^2}$.

   Now the subspace $\Gamma^c_\xi(\eta)$ of $\Xi$ is quasi-isometric to $\Gamma^c_\xi$ by the construction, which is a nilpotent group. Since $h$ is a contraction, $h^n(\Gamma^c_\xi(\eta))$ converge to the asymptotic cone of $\Gamma^c_\xi$ in Hasudorff-Gromov distance as pointed proper metric spaces. It follows that the asymptotic cone (see \cites{gromov1981groups, pansu1983croissance}) of $\Gamma^c_\xi$ embedding into $\Xi$ via a Lipschiz map. Recall that this asymptotic cone is a manifold, hence $H^c_\xi(\eta)$ is not totally disconnected, A contradiction.

   The results follow.
\end{proof}
\begin{theorem}\label{non-niform arithmetic1}
  Let $(\Gamma, \{P_i\}_{i=1}^n)$ be geometrically good, and denote the orbit of $\Gamma$ with induced metric by $(\Gamma,d)$.
  If \[[\GCommen{\Gamma}{d}:\Gamma]=\infty,\]
   then $(\Gamma,d)$ is rank-one arithmetic.
\end{theorem}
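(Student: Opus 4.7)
The plan is to combine Theorem~\ref{T: Structure 1} with Lemma~\ref{h'}, the key new ingredient tailored to the cusp structure, to force $H$ to be a rank-one simple real Lie group; arithmeticity of $(\Gamma,d)$ then follows exactly as in the proofs of Theorems~\ref{T: arithmetic} and~\ref{T: locally}. By the geometrically good hypothesis, Section~\ref{lattice} and Theorem~\ref{T: Structure 1} apply, so $d\Gamma$ is a lattice in a locally compact group $H$ falling into one of the three listed cases. Lemma~\ref{L: comm} shows the assumption $[\GCommen{\Gamma}{d}:\Gamma]=\infty$ forces $[\Commen_H(d\Gamma):d\Gamma]=\infty$, ruling out case~(1) where $H$ is discrete (a discrete group contains any lattice with finite index).

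The heart of the argument is ruling out case~(2), where $H$ is non-discrete and totally disconnected. My approach is to produce an element $h\in H$ fixing some cusp point $\xi\in\partial X$ with $h'(\xi)>1$, and then invoke Lemma~\ref{h'} for a contradiction. Since any element that commensurates $\Gamma$ conjugates maximal parabolic subgroups of $\Gamma$ to maximal parabolic subgroups, $\Commen_H(d\Gamma)$ acts on the finite set of $\Gamma$-orbits of cusp points. A finite-index subgroup preserves each orbit, and modulo $\Gamma$ each class has a representative fixing a chosen cusp point $\xi_1$, yielding infinitely many cosets of $P_1$ inside the stabilizer $\Commen_H(d\Gamma)_{\xi_1}$. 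The continuous derivative homomorphism $h\mapsto h'(\xi_1)$ into $\bbR_{>0}$ vanishes on the parabolic $P_1$. If it vanished on all of $\Commen_H(d\Gamma)_{\xi_1}$, the full stabilizer would lie in $H^c_{\xi_1}$ and act by Hamenst\"adt isometries of $\Lambda_X\Gamma\setminus\{\xi_1\}$, on which $P_1$ is already cocompact; combining this with the properness of the $H$-action on $\partial^{(2)}X\times\bbR$ and the discreteness of $d\Gamma$ in $H$ forces the infinite family of $P_1$-cosets to accumulate, a contradiction. Hence the derivative map has non-trivial image, giving the desired $h$ (after possibly replacing $h$ by $h^{-1}$).

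With cases~(1) and~(2) eliminated, $H$ must be the rank-one simple real Lie group $\Isom(\bfH)$, and $d\Gamma$ is a lattice in it with infinite-index, hence dense, commensurator. Margulis arithmeticity yields that $d\Gamma$ is arithmetic in $H$, and the marked length spectrum rigidity of~\cite{hao2023marked} (extending \cite{furman2002coarse}), together with the extension of the Busemann cocycle of $(\Gamma,d)$ to $H$ as in the last paragraph of Section~5, identifies $d$ with a rescaling of the pullback of the symmetric metric up to rough isometry, so $(\Gamma,d)$ is rank-one arithmetic. The main obstacle is the derivative-non-triviality step in case~(2): the intuition that an infinite family of commensurating elements fixing a cusp point cannot all have derivative~$1$, because $P_1$ is already cocompact on the Hamenst\"adt boundary, has to be turned into a rigorous argument using properness of the $H$-action and the totally disconnected topology, drawing on the same circle of ideas that underlie Lemma~\ref{h'} itself.
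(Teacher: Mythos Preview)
Your approach to ruling out case~(2) has a genuine gap, and in fact the strategy cannot succeed as stated. You aim to produce $h\in H$ fixing a cusp point $\xi$ with $h'(\xi)>1$ and then invoke Lemma~\ref{h'}. But read Lemma~\ref{h'} contrapositively: if $H$ is \emph{not} a Lie group (in particular, in the totally disconnected case you are trying to exclude), then \emph{no} such $h$ exists, i.e.\ $H_\xi=H^c_\xi$ for every cusp point. So in case~(2) the derivative homomorphism $h\mapsto h'(\xi_1)$ is automatically trivial on the whole stabilizer, and your ``accumulation'' argument cannot yield a contradiction. Concretely, there is nothing inconsistent about infinitely many $P_1$--cosets in $\Commen_H(d\Gamma)_{\xi_1}\subset H^c_{\xi_1}$ while $P_1$ acts cocompactly on the Hamenst\"adt boundary: the extra coset representatives simply accumulate inside a compact open subgroup of $H^c_{\xi_1}$, which is perfectly allowed in a totally disconnected group.

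The paper's proof uses Lemma~\ref{h'} in exactly this contrapositive way and then proceeds quite differently. Knowing $H_\xi=H^c_\xi$, one first shows (Step~1) that some compact open subgroup $K<H$ fixes a cusp point $\xi$: the dense set $d(\GCommen{\Gamma}{d})\cap K'$ permutes cusp points, and a precompactness argument via Lemma~\ref{precompact} forces the $K'$--orbit of $\xi$ to be finite. In Step~2, $K$ acts isometrically on the Hamenst\"adt space $\Xi$, on which $\Gamma_\xi$ is cocompact; each $k\in K$ therefore induces a quasi-isometry of the nilpotent group $\Gamma_\xi$ and hence an automorphism of its asymptotic cone $N$, which is the real Mal\cprime cev completion. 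On the dense subset $d(\GCommen{\Gamma}{d})\cap K$ this map $\chi:K\to\Aut(N)$ is injective (it extends the honest isomorphisms of finite-index subgroups), so $\chi(K)$ is non-discrete inside the Lie group $\Aut(N)$, contradicting that $K$ is compact and totally disconnected. This asymptotic-cone/Mal\cprime cev step is the genuine replacement for the Hilbert--Smith input, and it is missing from your outline.
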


\begin{proof} 
 By Theorem~\ref{T: Structure 1}, $\Gamma$ is a lattice of a locally compact group $H$ with three possibilities. By assumption, $H$ is not a discrete group. 

    If $H$ is a rank-one simple Lie group, we are done by the work of Margulis.

    If $H$ is a totally disconnected locally compact group. We will derive a contradiction. The construction is in two steps.

    \textbf{Step 1: Find a compact open subgroup fixed a cusp point.}
    
    For any cusp point $\xi$, denote 
    \[H_\xi=\setdef{h\in H}{h(\xi)=\xi}.\]
    By Lemma~\ref{h'}, $H_\xi=H^c_\xi$. Recall that $\Gamma$ is a cocompact lattice of $H$. Let $K'<H$ be a compact open subgroup such that $K'\cap \Gamma=\{\textbf{1}\}$. 
    
    Let $\xi$ be a cusp point of $\Gamma$ and $\xi_i$, $1\leq i\leq m$ be a set of representatives of $\Gamma$ nonequivalent cusps in the orbit $H\xi$, with $\xi_1=\xi$. We also fix elements $h_i\in H$ so that $h_i\xi=\xi_i$. 

    By construction $d(\GCommen{\Gamma}{d})$ is dense in $H$. Let $k\in d(\GCommen{\Gamma}{d})\cap K'$. Since
    $\GCommen{\Gamma}{d}$ map cusp point to cusp point. We have, there exist $1\leq i\leq n$ and $\gamma\in \Gamma$, so that $ \gamma k\xi_1=\xi_i$. Therefore $\gamma kh_i^{-1}\xi_i=\xi_i$. Now choose a compact fundamental domain $\Delta_i$ for $\Gamma_{\xi_i}$ actions on $\partial X\setminus \{\xi_i\}$ and $\eta_i\in\Delta_i$. There exists $p_k\in \Gamma_{\xi_i}$ so that $p_k\gamma kh_i^{-1}(\eta_i)\in \Delta_i$. By the condition $(p_K\gamma kh_i^{-1})'(\xi_i)=1$, and there are two almost fixed point, the set
    \[
    A:=\setdef{p_k\gamma kh_i^{-1}}{k\in d(\GCommen{\Gamma}{d})\cap K'}
    \] 
    is precompact by Lemma~\ref{precompact}.

    Thus the set $B=\setdef{p_k\gamma}{p_K\gamma kh_i^{-1}\in A}\subset \cup_{i=1}^m Ah_iK'$ is precompact. Note that $\Gamma$ is a lattice in $H$, therefore, the set $B$ is finite.

    Now, $kh_i^{-1}\xi_i=k\xi_1=(p_k\gamma)^{-1}\xi_i$. Hence the orbit of $\xi$ under $d(\GCommen{\Gamma}{d})\cap K'$ is finite. Now the set is dense in $K'$, therefore, the orbit of $\xi$ under $K'$ is finite. Up to take a finite index subgroup, we have an open compact subgroup $K$ of $H$ which fixes the cusp point $\xi$. 

    \textbf{Step 2: Asymptotic cone of nilpotent groups.}
    
    We consider the space $\Xi$ with the Hamenst\"{a}dt metric. Then $K$ and $\Gamma_\xi$ act isometrically on $\Xi$. By definition of geometrical goodness, $\Gamma_\xi$ acts cocompactly on $\Xi$. Then Milnor-\v{S}varc lemma implies that the orbit $\Gamma_\xi o$ with $o\in \Xi$ is quasi-isometric to a word metric. 
    
    Therefore, each $k\in K$ induced a quasi-isometry of a word metric on $\Gamma_\xi$. Notice that $\Gamma_\xi$ is a nilpotent group. Recall the work of Gromov and Pansu on the asymptotic cone of nilpotent groups. We have that each $k$ induced a group homeomorphism of the asymptotic cone of $\Gamma_\xi$, which is a Lie group $N$. Denote this map by 
    \[\chi: K\to \Aut(N).\] 
    Note that $\chi(K)$ is a compact subgroup since all elements in $K$ are isometry of $\Xi$. 

    On the other hand, $N$ is the $\mathbb{R}$-Malcev completion of $\Gamma_\xi$ \cite{MR28842}. Hence for all $k\in d(\GCommen{\Gamma}{d})\cap K$, the induced map on finite index subgroups of $\Gamma_\xi$ extends to $N$. Therefore, 
    \[\chi: d(\GCommen{\Gamma}{d})\cap K\to \Aut(N)\]
    is injective. 
    Therefore $\chi(K)$ is not discrete. A contradiction.

   Hence, $H$ is a Lie group. Following the same argument as in the proof of Theorem~\ref{T: arithmetic}, we have $(\Gamma,d)$ is arithmetic.

This completes the proof.

\end{proof}

\begin{bibdiv}
\begin{biblist}
\bib{bader2019lattices}{article}{
  title={Lattices in amenable groups},
  author={Bader, Uri},
  author={Caprace, Pierre-Emmanuel},
  author={Gelander, Tsachik},
  author={Mozes, Shahar},
  journal={Fundamenta Mathematicae},
  volume={246},
  pages={217--255},
  year={2019},
  publisher={Instytut Matematyczny Polskiej Akademii Nauk}
}

\bib{bader2014boundaries}{article}{
  title={Boundaries, rigidity of representations, and Lyapunov exponents},
  author={Bader, Uri}, 
  author={Furman, Alex},
  journal={arXiv preprint arXiv:1404.5107},
  year={2014}
}

\bib{bader2020lattice}{article}{
  title={Lattice envelopes},
  author={Bader, Uri},
  author={Furman, Alex},
  author={Sauer, Roman},
  journal={Duke Mathematical Journal},
  volume={169},
  number={2},
  pages={213--278},
  year={2020}
}



\bib{burger2001continuous}{article}{
  title={Continuous bounded cohomology and applications to rigidity theory},
  author={Burger, M},
  author={Monod, N},
  journal={Geometric And Functional Analysis},
  volume={12},
  number={2},
  pages={219--280},
  year={2002},
  publisher={Birkh{\"a}user Verlag AG}
}

\bib{chen1980isometry}{article}{
  title={Isometry groups of simply connected manifolds of nonpositive curvature},
  author={Chen, Su-Shing},
  author={Eberlein, Patrick},
  journal={Illinois Journal of Mathematics},
  volume={24},
  number={1},
  pages={73--103},
  year={1980},
  publisher={Duke University Press}
}

\bib{das2017geometry}{book}{
  title={Geometry and dynamics in Gromov hyperbolic metric spaces},
  author={Das, Tushar},
  author={Simmons, David},
  author={Urba{\'n}ski, Mariusz},
  volume={218},
  year={2017},
  publisher={American Mathematical Soc.}
}

\bib{eberlein1980lattices}{article}{
  title={Lattices in spaces of nonpositive curvature},
  author={Eberlein, Patrick},
  journal={Annals of Mathematics},
  volume={111},
  number={3},
  pages={435--476},
  year={1980},
  publisher={JSTOR}
}
\bib{eberlein1982isometry}{article}{
  title={Isometry groups of simply connected manifolds of nonpositive curvature II},
  author={Eberlein, Patrick},
  journal={Acta Mathematica},
  volume={149},
  number={1},
  pages={41--69},
  year={1982},
  publisher={Springer}
}

\bib{farb2005hidden}{article}{
  title={Hidden symmetries and arithmetic manifolds},
  author={Farb, Benson},
  author={Weinberger, Shmuel},
  journal={Contemporary Mathematics},
  volume={387},
  pages={111},
  year={2005},
  publisher={Providence, RI; American Mathematical Society; 1999}
}

\bib{farb2008isometries}{article}{
  title={Isometries, rigidity and universal covers},
  author={Farb, Benson},
  author={Weinberger, Shmuel},
  journal={Annals of mathematics},
  volume={168},
  number={3},
  pages={915--940},
  year={2008}
}

\bib{furman2002coarse}{incollection}{
  title={Coarse-geometric perspective on negatively curved manifolds and groups},
  author={Furman, Alex},
  booktitle={Rigidity in dynamics and geometry},
  pages={149--166},
  year={2002},
  publisher={Springer}
}

\bib{2023}{article}{
  title={Marked length pattern and arithmeticity},
  author={Furman, Alexander}, author={Hao, Yanlong},
  journal={},
  year={2023}
}

\bib{gromov1981groups}{article}{
  title={Groups of polynomial growth and expanding maps (with an appendix by Jacques Tits)},
  author={Gromov, Michael},
  journal={Publications Math{\'e}matiques de l'IH{\'E}S},
  volume={53},
  pages={53--78},
  year={1981}
}

\bib{groves2008dehn}{article}{
  title={Dehn filling in relatively hyperbolic groups},
  author={Groves, Daniel},
  author={Manning, Jason Fox},
  journal={Israel Journal of Mathematics},
  volume={168},
  pages={317--429},
  year={2008},
  publisher={Springer}
}

\bib{haettel2020hyperbolic}{inproceedings}{
  title={Hyperbolic rigidity of higher rank lattices},
  author={Haettel, Thomas},
  author={Guirardel, Vincent},
  author={Horbez, Camille},
  booktitle={Annales Scientifiques de l'{\'E}cole Normale Sup{\'e}rieure},
  volume={53},
  number={2},
  pages={439--468},
  year={2020}
}

\bib{hamenstadt1997cocycles}{article}{
  title={Cocycles, Symplectic Structures and Intersection},
  author={Hamenstdt, U},
  journal={Geometric and Functional Analysis},
  volume={1},
  number={9},
  pages={90--140},
  year={1999}
}

\bib{hao2023marked}{article}{
  title={Marked length spectrum rigidity of group acts isometrically on a Gromov hyperbolic space},
  author={Hao, Yanlong},
  journal={arXiv preprint arXiv:2312.09019},
  year={2023}
}

\bib{hao2024marked}{article}{ title={Marked length pattern rigidity for arithmetic manifolds}, DOI={10.1017/etds.2024.72}, journal={Ergodic Theory and Dynamical Systems}, author={Hao, Yanlong}, year={2024}, pages={1–25}} 

\bib{MR28842}{article}{
    AUTHOR = {Mal\cprime cev, A. I.},
     TITLE = {On a class of homogeneous spaces},
   JOURNAL = {Izv. Akad. Nauk SSSR Ser. Mat.},
    VOLUME = {13},
      YEAR = {1949},
     PAGES = {9--32},
      ISSN = {0373-2436},
}

\bib{nica2013proper}{article}{
  title={Proper isometric actions of hyperbolic groups on  $L^{p}$-spaces},
  author={Nica, Bogdan},
  journal={Compositio Mathematica},
  volume={149},
  number={5},
  pages={773--792},
  year={2013},
  publisher={London Mathematical Society}
}

\bib{nica2016strong}{article}{
  title={Strong hyperbolicity},
  author={Nica, Bogdan},
  author={{\v{S}}pakula, J{\'a}n},
  journal={Groups, Geometry, and Dynamics},
  volume={10},
  number={3},
  pages={951--964},
  year={2016}
}

\bib{otal1992geometrie}{article}{
  title={Sur la g{\'e}om{\'e}trie symplectique de l'espace des g{\'e}od{\'e}siques d'une vari{\'e}t{\'e} {\`a} courbure n{\'e}gative},
  author={Otal, Jean-Pierre},
  journal={Revista matem{\'a}tica iberoamericana},
  volume={8},
  number={3},
  pages={441--456},
  year={1992}
}

\bib{pansu1983croissance}{article}{
  title={Croissance des boules et des g{\'e}od{\'e}siques ferm{\'e}es dans les nilvari{\'e}t{\'e}s},
  author={Pansu, Pierre},
  journal={Ergodic Theory and Dynamical Systems},
  volume={3},
  number={3},
  pages={415--445},
  year={1983},
  publisher={Cambridge University Press}
}

\bib{pardon2013hilbert}{article}{
  title={The Hilbert--Smith conjecture for three-manifolds},
  author={Pardon, John},
  journal={Journal of the American Mathematical Society},
  volume={26},
  number={3},
  pages={879--899},
  year={2013}
}

\bib{paulin1996groupe}{article}{
  title={Un groupe hyperbolique est d{\'e}termin{\'e} par son bord},
  author={Paulin, Fr{\'e}d{\'e}ric},
  journal={Journal of the London Mathematical Society},
  volume={54},
  number={1},
  pages={50--74},
  year={1996},
  publisher={Oxford University Press}
}

\bib{repovs1997proof}{article}{
  title={A proof of the Hilbert-Smith conjecture for actions by Lipschitz maps},
  author={Repovs, Dusan},
  author={Scepin, Evgenij V},
  journal={Mathematische Annalen},
  volume={308},
  pages={361--364},
  year={1997},
  publisher={Springer}
}

\bib{sullivan1979density}{article}{
  title={The density at infinity of a discrete group of hyperbolic motions},
  author={Sullivan, Dennis},
  journal={Publications Math{\'e}matiques de l'IH{\'E}S},
  volume={50},
  pages={171--202},
  year={1979}
}

\bib{van2014riemannian}{article}{
  title={Riemannian manifolds with local symmetry},
  author={van Limbeek, Wouter},
  journal={Journal of Topology and Analysis},
  volume={6},
  number={02},
  pages={211--236},
  year={2014},
  publisher={World Scientific}
}
	
\bib{zippin1955topological}{book}{
  title={Topological Transformation Groups.},
  author={Zippin, Leo},
  author={Montgomery, Deane},
  year={1955},
  publisher={Interscience Publishers}
}
 \end{biblist}
\end{bibdiv}

\end{document}